\theoremstyle{theorem}
\newtheorem{theorem}{Theorem}[section]
\newtheorem{example}[theorem]{Example}
\newtheorem{lemma}[theorem]{Lemma}
\newtheorem{proposition}[theorem]{Proposition}
\newtheorem{conjecture}[theorem]{Conjecture}
\theoremstyle{definition}
\theoremstyle{remark}
\newtheorem{remark}[theorem]{Remark}
\numberwithin{equation}{section}
\begin{document}
\title{\Large Finite-dimensional subalgebras of the Virasoro algebra}
\author{Zhihua Chang\footnote{zhihuachang@gmail.com}}
\maketitle

{\small School of Mathematics, South China University of Technology, Guangzhou, Guangdong, 510640, P. R. China.}

\normalsize

\begin{abstract}
We determine all two-dimensional Lie subalgebras of the centreless Virasoro algebra and complete the characterization of all finite dimensional Lie subalgebras of the complex Virasoro algebra.
\medskip

\noindent Keywords: Virasoro algebra, Lie subalgebra
\medskip

\noindent MSC2010: 17B68, 17B05
\end{abstract}

\section{Introduction}
Let $\frak{d}$ be the centreless Virasoro algebra over $\Bbb{C}$, which is the Lie algebra of derivations of the Laurent polynomial algebra $\Bbb{C}[t^{\pm1}]$. Obviously, $\frak{d}$ has a basis $\{L_m:=-t^{m}\partial, m\in\Bbb{Z}\}$, where $\partial$ denotes the degree operator $t\frac{d}{dt}$ throughout the paper. They satisfy:
$$[L_m,L_n]=(m-n)L_{m+n}, \text{ for } m,n\in\Bbb{Z}.$$
The one-dimensional non-trivial central extension of $\frak{d}$ is the so-called Virasoro algebra $\hat{\frak{d}}:=\frak{d}\oplus\Bbb{C}K$, on which the bracket is given by
$$[L_m,L_n]=(m-n)L_{m+n}+\frac{1}{12}(m^3-m)\delta_{m,-n}K,$$
for $m,n\in\Bbb{Z}$ and $K$ is a central element.

It has been known for decades that $\frak{d}$ is a simple infinite-dimensional Lie algebra. $\frak{d}$ has no finite dimensional subalgebra of dimension greater than or equal to four (c.f. \cite[Proposition~3.1]{Ng2000witt}). Each three-dimensional subalgebra of $\frak{d}$ is spanned by $\{L_n,L_0,L_{-n}\}$ for some positive integer $n$ (c.f. \cite[Proposition~3.4]{Ng2000witt} or \cite[Lemma~3.1]{suzhao2002virasoro}). However, a complete list of two-dimensional subalgebras of $\frak{d}$ has not been obtained yet. It is easy to observe that $\{L_0,L_n\}$ spans a two-dimensional subalgebra of $\frak{d}$ for each nonzero integer $n$. However, not every two dimensional subalgebra of $\frak{d}$ is of this form. Such examples have been given in \cite[Lemma~3.2]{suzhao2002virasoro}, as well as in \cite{yulu2006virasoro}.

This paper is devoted to determine all two-dimensional subalgebras of $\frak{d}$. Indeed, we have already known that the only commutative subalgebras of $\frak{d}$ are those of one-dimensional. Hence, every two-dimensional subalgebra of $\frak{d}$ is non-commutative, thus has a basis $\{X,Y\}$ such that $[X,Y]=c Y$ for some nonzero $c\in\Bbb{C}$. If we write $X=F(t)\partial$ and $Y=G(t)\partial$ for $F(t), G(t)\in\Bbb{C}[t^{\pm1}]$, then $[X,Y]=c Y$ is equivalent to
\begin{equation}
t(F(t)G'(t)-G(t)F'(t))=c G(t),\label{eq:diffeq}
\end{equation}
where $F'(t)$ and $G'(t)$ are the formal derivatives of $F(t)$ and $G(t)$, respectively. Therefore, our problem that aims to find all two-dimensional subalgebras of $\frak{d}$ is reduced to find all solutions of the differential equation (\ref{eq:diffeq}) in $\Bbb{C}[t^{\pm1}]$. One might use the theory of differential equations to solve (\ref{eq:diffeq}), then to obtain all Laurent polynomial solutions for $F(t)$ and $G(t)$. But we will use algebraic methods to achieve this in this paper.

With this spirit, we will construct a family of two-dimensional subalgebras of $\frak{d}$ in Section~\ref{sec:example}, and discuss properties of the parameters describing this family in Section~\ref{sec:variety}. In Section~\ref{sec:classification}, all two-dimensional subalgebras of $\frak{d}$ will be determined. Finally, we will characterize all finite dimensional subalgebras of the Virasoro algebra $\hat{\frak{d}}$ in Section~\ref{sec:subvir}.

Throughout this paper, we will use $\Bbb{N}, \Bbb{Z}$, and $\Bbb{C}$ to denote the sets of positive integers, integers, and complex numbers, respectively. $\Bbb{Z}^{\times}$ and $\Bbb{C}^{\times}$ will denote the set of nonzero integers and nonzero complex numbers, respectively.

The Lie algebra $\frak{d}$ has a triangular decomposition
$$\frak{d}=\frak{d}_-\oplus\frak{d}_0\oplus\frak{d}_+$$
where $\frak{d}_{\pm}=\mathrm{span}_{\Bbb{C}}\{t^{m}\partial|\pm m\in\Bbb{N}\}$ and $\frak{d}_0=\Bbb{C}\partial$. For an element $X\in\frak{d}$, we write $$X=(\alpha_1t^{r_1}+\cdots+\alpha_st^{r_s})\partial\in\frak{d}$$
such that $r_1<\cdots<r_s$ and $\alpha_1,\ldots,\alpha_s\neq0$. Then we define $\deg_1(X)=r_s$ and $\deg_2(X)=r_1$. A Laurent polynomial $F(t)\in\Bbb{C}[t^{\pm1}]$ is said to be monic if the coefficient of the highest power of $t$ is $1$.

\section{A family of two-dimensional subalgebras of $\frak{d}$}
\label{sec:example}

It is known that $\frak{z}(m)=\mathrm{span}_{\Bbb{C}}\{\partial,t^{m}\partial\}$ is a two-dimensional subalgebra of $\frak{d}$ for each $m\in\Bbb{Z}^{\times}$. The key figure of the subalgebra $\frak{z}(m)$ is that it is contained in either $\frak{d}_{0}\oplus\frak{d}_+$ or $\frak{d}_{0}\oplus\frak{d}_-$. We will create another family of two dimensional subalgebras of $\frak{d}$ such that each two-dimensional subalgebra in the new family is neither contained in $\frak{d}_{0}\oplus\frak{d}_+$, nor contained in $\frak{d}_{0}\oplus\frak{d}_-$.

In order to describe the new family of two-dimensional subalgebras of $\frak{d}$, we first introduce the following notation:
\begin{enumerate}
\item Give $n,k\in\Bbb{N}$ with $n\geqslant k$, we define the set
\begin{equation}
\Gamma(n,k):=\left\{(r_1,\ldots,r_n)\in\Bbb{N}^k\times\{-1\}^{n-k}\middle|
r_1+\cdots+r_n\geqslant k\right\}.\label{eq:conditionr}
\end{equation}
For $\mathbf{r}\in\Gamma(n,k)$, we denote $|\mathbf{r}|:=r_1+\cdots+r_n$.
\item Given $\mathbf{r}=(r_1,\ldots,r_n)\in\Gamma(n,k)$, we define the set
\begin{equation}
V(\mathbf{r}):=\Big\{(a_1,\ldots,a_n)\in\Bbb{C}^n\Big|\textstyle{\sum\limits_{j=1}^n}r_ja_j^i=0,\text{ for } i=1,\ldots,n-1\Big\},\label{eq:defeqvar}
\end{equation}
and denote $V(\mathbf{r})^{\times}:=V(\mathbf{r})\cap(\Bbb{C}^{\times})^n$.
\item $\Sigma:=\{(n,k,\mathbf{r},\mathbf{a})|n,k\in\Bbb{N}\text{ with }n\geqslant k,\mathbf{r}\in\Gamma(n,k),\text{ and }\mathbf{a}\in V(\mathbf{r})^{\times}\}$.
\end{enumerate}

\begin{remark}
For given $n,k\in\Bbb{N}$ with $n\geqslant k$ and $\mathbf{r}\in\Gamma(n,k)$, an element $\mathbf{a}=(a_1,\ldots,a_n)\in V(\mathbf{r})^{\times}$ is an element of $V(\mathbf{r})$ such that all $a_i$ are nonzero. With certain additional restrictions on $\mathbf{r}$, we can prove that a nonzero element\footnote{A nonzero element of $V(\mathbf{r})$ means an element of $V(\mathbf{r})$ with at least one nonzero coordinate.} of $V(\mathbf{r})$ is always an element of $V(\mathbf{r})^{\times}$ (c.f. Proposition~\ref{prop:mul1case}). However, this is not true in general. For example, if $\mathbf{r}=(2,2,-1,-1)\in\Gamma(4,2)$, we have $(1,0,1,1)$ is a nonzero element of $V(\mathbf{r})$, but it is not an element of $V(\mathbf{r})^{\times}$.
\end{remark}
\bigskip

Now, we may proceed to construct two-dimensional subalgebras of $\frak{d}$:

\begin{proposition}
\label{prop:newsubalg}
For $\mu:=(n,k,\mathbf{r},\mathbf{a})\in\Sigma$, let
\begin{align}
P_{\mu}(t)&:=(t-a_1)\cdots(t-a_n)\in\Bbb{C}[t],\label{eq:upoly}\\
Q_{\mu}(t)&:=t^{-|\mathbf{r}|}\cdot(t-a_1)^{r_1+1}\cdots(t-a_k)^{r_k+1}\in\Bbb{C}[t^{\pm1}].\label{eq:vpoly}
\end{align}
Then the two-dimensional subspace
\begin{equation}
\frak{s}(\mu):=\mathrm{span}_{\Bbb{C}}\{P_{\mu}(t)\partial,Q_{\mu}(t)\partial\}\subseteq\frak{d},\label{eq:newsubalg}
\end{equation}
is a Lie subalgebra of $\frak{d}$. Indeed, $P_{\mu}(t)\partial$ and $Q_{\mu}(t)\partial$ satisfy
\begin{equation}
\left[P_{\mu}(t)\partial,Q_{\mu}(t)\partial\right]=c_{\mu} Q_{\mu}(t)\partial,\label{eq:comin2d}
\end{equation}
where $c_{\mu}=(-1)^{n+1}|\mathbf{r}|a_1\cdots a_n$.
\end{proposition}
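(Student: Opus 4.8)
The plan is to verify the differential identity \eqref{eq:comin2d} directly, since equation \eqref{eq:diffeq} tells us that $[P_\mu(t)\partial, Q_\mu(t)\partial] = c_\mu Q_\mu(t)\partial$ is equivalent to $t\bigl(P_\mu Q_\mu' - Q_\mu P_\mu'\bigr) = c_\mu Q_\mu$, and once this is established the subspace $\mathfrak{s}(\mu)$ is automatically a subalgebra (the two spanning vectors are linearly independent because $P_\mu$ has degree $n$ in $t$ while $Q_\mu$, after clearing the $t^{-|\mathbf r|}$ factor, has degree $\sum_{j=1}^k(r_j+1) = |\mathbf r| + k - (n-k) + \cdots$; in any case one checks $P_\mu$ is a genuine polynomial of degree $n$ and $Q_\mu$ is not a scalar multiple of it). So the whole content is the identity $t(P_\mu Q_\mu' - Q_\mu P_\mu') = c_\mu Q_\mu$ in $\Bbb C[t^{\pm1}]$.

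The key computational device I would use is the logarithmic derivative. Write $R(t) = Q_\mu(t)/P_\mu(t)$; then $P_\mu Q_\mu' - Q_\mu P_\mu' = P_\mu^2 R'$, and dividing the desired identity by $P_\mu Q_\mu$ turns it into
\[
t\,\frac{Q_\mu'(t)}{Q_\mu(t)} - t\,\frac{P_\mu'(t)}{P_\mu(t)} = c_\mu \cdot \frac{1}{P_\mu(t)}.
\]
Now the logarithmic derivatives are easy to expand from the factored forms \eqref{eq:upoly}, \eqref{eq:vpoly}:
\[
t\,\frac{P_\mu'(t)}{P_\mu(t)} = \sum_{j=1}^n \frac{t}{t-a_j}, \qquad
t\,\frac{Q_\mu'(t)}{Q_\mu(t)} = -|\mathbf r| + \sum_{j=1}^k (r_j+1)\frac{t}{t-a_j}.
\]
Subtracting and simplifying $\frac{t}{t-a_j} = 1 + \frac{a_j}{t-a_j}$, and using $\sum_{j=1}^n r_j = |\mathbf r|$ together with $r_j = -1$ for $j>k$, the polynomial (non-pole) parts cancel and one is left with
\[
t\,\frac{Q_\mu'(t)}{Q_\mu(t)} - t\,\frac{P_\mu'(t)}{P_\mu(t)} = \sum_{j=1}^n \frac{r_j a_j}{t-a_j}.
\]
So the identity to prove reduces to
\[
\sum_{j=1}^n \frac{r_j a_j}{t - a_j} = \frac{c_\mu}{P_\mu(t)} = \frac{(-1)^{n+1}|\mathbf r|\, a_1\cdots a_n}{(t-a_1)\cdots(t-a_n)}.
\]

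This last identity is a partial-fraction statement, and here is exactly where the defining equations \eqref{eq:defeqvar} of $V(\mathbf r)$ enter — this is the step I expect to be the crux. The left-hand side, put over the common denominator $P_\mu(t)$, has numerator $N(t) := \sum_{j=1}^n r_j a_j \prod_{\ell \neq j}(t - a_\ell)$, a polynomial of degree at most $n-1$. I would argue that $N(t)$ is in fact a constant: its coefficient of $t^{n-1-i}$ is, up to a sign and elementary symmetric functions, a combination of the power sums $\sum_j r_j a_j^{m}$, and the conditions $\sum_j r_j a_j^i = 0$ for $i = 1,\dots,n-1$ force all coefficients of positive degree to vanish. (Concretely, one can either expand $\prod_{\ell\neq j}(t-a_\ell)$ via Newton-type identities, or — cleaner — observe that $N(t)/P_\mu(t) = \sum_j \frac{r_j a_j}{t-a_j} = \sum_{i\geq 1} t^{-i}\sum_j r_j a_j^{\,i}$ as a Laurent expansion at $t=\infty$, and the vanishing of the first $n-1$ power sums $\sum_j r_j a_j^i$ kills all terms down to $t^{-(n-1)}$, which since $\deg N \le n-1$ forces $N$ to be constant.) Finally, evaluating the constant: $N(t) \equiv N(0) = \sum_{j=1}^n r_j a_j \prod_{\ell\neq j}(-a_\ell) = (-1)^{n-1}(a_1\cdots a_n)\sum_{j=1}^n r_j = (-1)^{n-1}|\mathbf r|\,a_1\cdots a_n = c_\mu$, which matches. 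The main obstacle is thus purely bookkeeping with symmetric functions to see that the $n-1$ linear conditions defining $V(\mathbf r)$ are precisely what is needed to collapse $N(t)$ to a constant; everything else is formal manipulation of logarithmic derivatives. I would also remark at the end that the Laurent-polynomial (rather than rational) nature of $Q_\mu$ is guaranteed by the exponent $-|\mathbf r|$ together with $|\mathbf r|\geq k\geq 1$, and that $P_\mu(t)\partial \in \mathfrak d$ trivially while $Q_\mu(t)\partial\in\mathfrak d$ since $Q_\mu\in\Bbb C[t^{\pm1}]$.
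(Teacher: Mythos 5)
Your proof is correct, and after clearing denominators it is aimed at the same identity as the paper's: your numerator $N(t)=\sum_{j}r_ja_j\prod_{\ell\neq j}(t-a_\ell)$ coincides (by writing $t\prod_{j\neq l}(t-a_j)=P_\mu(t)+a_l\prod_{j\neq l}(t-a_j)$) with the factor $F(t)=-|\mathbf{r}|\prod_j(t-a_j)+\sum_l r_lt\prod_{j\neq l}(t-a_j)$ that the paper extracts from the bracket. The difference is in how constancy is forced. The paper evaluates $F$ at the $n+1$ points $0,a_1,\ldots,a_n$, using Lemma~\ref{lem:propertya} to get $F(a_i)=c_\mu$; that lemma converts the defining equations \eqref{eq:defeqvar} into the product identities \eqref{eq:defeqvar2} by inverting a Vandermonde matrix, and so needs the $a_i$ to be nonzero and pairwise distinct (the first half of that lemma). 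You instead expand $\sum_j r_ja_j/(t-a_j)=\sum_{i\geqslant1}\bigl(\sum_j r_ja_j^i\bigr)t^{-i}$ at $t=\infty$, so the equations defining $V(\mathbf{r})$ enter directly as vanishing power sums; this bypasses Lemma~\ref{lem:propertya} entirely, requires neither distinctness nor nonvanishing of the $a_i$, and in fact proves the bracket identity for any $\mathbf{a}\in V(\mathbf{r})$. What the paper's route buys is economy later: Lemma~\ref{lem:propertya} is needed anyway (in both directions) in the proof of Theorem~\ref{thm:2dsubalgofd}, so proving it once and quoting it here costs nothing, whereas your route is more self-contained at this spot. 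Two minor points: your degree bookkeeping for linear independence is off ($\sum_{j=1}^k(r_j+1)=|\mathbf{r}|+n$, so $P_\mu$ and $t^{|\mathbf{r}|}Q_\mu$ have the same top degree $n$); the clean reason the span is two-dimensional is $\deg_2(P_\mu(t)\partial)=0$ while $\deg_2(Q_\mu(t)\partial)=-|\mathbf{r}|\leqslant-k<0$. Also, when you divide by $P_\mu Q_\mu$ you should note explicitly that the resulting identity of rational functions is equivalent to the desired identity in $\Bbb{C}[t^{\pm1}]$, which is immediate since $P_\mu Q_\mu\neq0$.
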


To prove this proposition, we need the following lemma:
\begin{lemma}
\label{lem:propertya}
Let $n\geqslant k$ be two positive integers and $\mathbf{r}=(r_1,\ldots,r_n)\in\Gamma(n,k)$. Suppose that  $\mathbf{a}=(a_1,\ldots,a_n)\in(\Bbb{C}^{\times})^n$. Then $\mathbf{a}\in V(\mathbf{r})$ if and only if
\begin{equation}
r_i\prod_{j:j\neq i}(a_j-a_i)=|\mathbf{r}|\prod_{j:j\neq i}a_j\label{eq:defeqvar2}
\end{equation}
for $i=1,\ldots,n$. In particular, for $\mathbf{a}\in V(\mathbf{r})^{\times}$, we have $a_i\neq a_j$ for $i\neq j$.
\end{lemma}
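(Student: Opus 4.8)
The plan is to package the $n-1$ linear equations defining $V(\mathbf{r})$ into a single polynomial identity and then recover (\ref{eq:defeqvar2}) by evaluating it at the $a_i$. Set $P(t)=\prod_{j=1}^{n}(t-a_j)$ and $N(t)=\sum_{j=1}^{n}r_j\prod_{l\neq j}(t-a_l)$, so $\deg N\leq n-1<n=\deg P$. As rational functions $N(t)/P(t)=\sum_{j=1}^{n}r_j/(t-a_j)$, and expanding each term as a formal power series in $t^{-1}$ gives $N(t)/P(t)=\sum_{m\geq1}\bigl(\sum_{j=1}^{n}r_ja_j^{\,m-1}\bigr)t^{-m}$. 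The coefficient of $t^{-1}$ is $|\mathbf{r}|$, and the coefficients of $t^{-2},\dots,t^{-n}$ are exactly the quantities $\sum_jr_ja_j^{\,i}$, $i=1,\dots,n-1$. Hence $\mathbf{a}\in V(\mathbf{r})$ iff $N(t)/P(t)-|\mathbf{r}|t^{-1}$ is $O(t^{-n-1})$; writing this difference as $(tN(t)-|\mathbf{r}|P(t))/(tP(t))$ with $tP(t)$ monic of degree $n+1$, a polynomial divided by $tP(t)$ can be $O(t^{-n-1})$ only if the polynomial is constant, so $\mathbf{a}\in V(\mathbf{r})$ iff $tN(t)-|\mathbf{r}|P(t)$ is constant, and evaluating at $t=0$ identifies that constant as $(-1)^{n+1}|\mathbf{r}|a_1\cdots a_n$ (the $c_\mu$ of Proposition~\ref{prop:newsubalg}). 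So the key reduction is:
\[
\mathbf{a}\in V(\mathbf{r})\quad\Longleftrightarrow\quad tN(t)=|\mathbf{r}|P(t)+(-1)^{n+1}|\mathbf{r}|a_1\cdots a_n .
\]

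For the ``only if'' direction (which is exactly the case $\mathbf{a}\in V(\mathbf{r})^{\times}$), I would substitute $t=a_i$ into the right-hand identity: since $P(a_i)=0$ and every term of $N(t)$ other than the $j=i$ one carries the factor $a_i-a_i$, we get $a_ir_i\prod_{l\neq i}(a_i-a_l)=(-1)^{n+1}|\mathbf{r}|a_1\cdots a_n$. The right side is nonzero because $|\mathbf{r}|\geq k\geq1$ and all $a_l\neq0$, so already $a_i\neq a_l$ for $l\neq i$, which is the ``in particular'' statement; dividing by $a_i$ and collecting the $n-1$ sign changes $a_i-a_l=-(a_l-a_i)$ turns this into (\ref{eq:defeqvar2}).

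For the converse, assume (\ref{eq:defeqvar2}) holds for all $i$. First the $a_i$ are pairwise distinct: if $a_i=a_{i'}$ with $i'\neq i$, the left side of (\ref{eq:defeqvar2}) for the index $i$ contains the factor $a_{i'}-a_i=0$ and vanishes, while its right side $|\mathbf{r}|\prod_{l\neq i}a_l$ is nonzero. Reversing the previous manipulation, (\ref{eq:defeqvar2}) says $a_iN(a_i)=(-1)^{n+1}|\mathbf{r}|a_1\cdots a_n$ for every $i$, so the polynomial $tN(t)-(-1)^{n+1}|\mathbf{r}|a_1\cdots a_n$, of degree $\leq n$, vanishes at the $n$ distinct points $a_1,\dots,a_n$ and therefore equals $\lambda P(t)$ for some scalar $\lambda$. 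Comparing constant terms, with $P(0)=(-1)^{n}a_1\cdots a_n$, forces $\lambda=|\mathbf{r}|$, and this is precisely the right-hand side of the key reduction; hence $\mathbf{a}\in V(\mathbf{r})$.

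The step I expect to be the main obstacle is the key reduction itself — making rigorous that the vanishing of the finitely many coefficients $\sum_jr_ja_j^{\,i}$ ($1\leq i\leq n-1$) is equivalent to the polynomial identity $tN(t)=|\mathbf{r}|P(t)+(-1)^{n+1}|\mathbf{r}|a_1\cdots a_n$, which relies on the degree bound $\deg N<\deg P$ and on $tP(t)$ having degree $n+1$. A secondary subtlety is that the converse genuinely requires the $a_i$ to be distinct before interpolation applies; distinctness is not hypothesized and must be forced out of (\ref{eq:defeqvar2}), which is where $|\mathbf{r}|\geq1$ and $\mathbf{a}\in(\Bbb{C}^{\times})^{n}$ are used.
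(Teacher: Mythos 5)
Your proof is correct, but it takes a genuinely different route from the paper's. The paper works with the Vandermonde matrix directly: it first establishes $a_i\neq a_j$ for $\mathbf{a}\in V(\mathbf{r})^{\times}$ by a separate argument (grouping equal coordinates into equivalence classes, using invertibility of a smaller Vandermonde matrix to force $r_{I_j}=0$ and contradict $|\mathbf{r}|\geqslant k$), and only then inverts the full $n\times n$ Vandermonde system $\sum_j r_ja_j^i=\delta_{i,0}|\mathbf{r}|$ to read off (\ref{eq:defeqvar2}) via the explicit formula for the inverse; the converse reverses the same matrix identity. You instead package the $n-1$ defining equations into the single polynomial identity $tN(t)=|\mathbf{r}|P(t)+(-1)^{n+1}|\mathbf{r}|a_1\cdots a_n$ via the expansion of $\sum_j r_j/(t-a_j)$ at infinity, and then both directions of the lemma become evaluation at the $a_i$ and Lagrange interpolation; your worry about that reduction is unfounded, since the expansion lives in $\Bbb{C}((t^{-1}))$ and the degree count ($\deg\bigl(tN-|\mathbf{r}|P\bigr)\leqslant n$ against $\deg\bigl(tP\bigr)=n+1$) is exactly what is needed. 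Two genuine advantages of your route: distinctness of the $a_i$ falls out of the same identity (nonvanishing of $(-1)^{n+1}|\mathbf{r}|a_1\cdots a_n$, using $|\mathbf{r}|\geqslant k\geqslant1$ and $a_i\neq0$) rather than requiring the paper's separate equivalence-class argument, and your key identity is precisely the statement that the bracket polynomial $C(t)$ in the proofs of Proposition~\ref{prop:newsubalg} and Theorem~\ref{thm:2dsubalgofd} is the constant $c_{\mu}$, so it makes the link between $V(\mathbf{r})$ and the eigenvector equation explicit. What the paper's approach buys in exchange is that it stays within elementary linear algebra (Vandermonde inversion) without invoking formal Laurent expansions.
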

\begin{proof}
We first prove that $\mathbf{a}\in V(\mathbf{r})^{\times}$ implies $a_i\neq a_j$ for $i\neq j$.

The set $\{1,\ldots,n\}$ is divided into a disjoint union of subsets $I_1,\ldots, I_s$ according to the equivalence relation: $i\sim j$ if $a_i=a_j$. In order to prove $a_i\neq a_j$ for $i\neq j$, it suffices to show that there are exactly $n$ distinct equivalence classes. For $i=1,\ldots,s$, we use $a_{I_i}$ to denote the common value $a_l$ for $l\in I_i$, and $r_{I_i}=\sum_{l\in I_i}r_l$.

Since $\mathbf{a}\in V(\mathbf{r})$, we have
$$\sum_{j=1}^s r_{I_j}a_{I_j}^i=0$$
for $i=1,\ldots,n-1$.

If $s\leqslant n-1$, then the matrix $(a_{I_j}^i)_{1\leqslant i,j\leqslant s}$ is invertible since $a_{I_j}\neq0$ for $j=1,\ldots,s$ and $a_{I_i}\neq a_{I_j}$ for $i\neq j$. It follows that $r_{I_j}=0$ for all $j=1,\ldots,s$, and hence
$$|\mathbf{r}|=r_1+\cdots+r_n=r_{I_1}+\cdots+r_{I_s}=0,$$
which contradicts the assumption that $|\mathbf{r}|\geqslant k$. Hence, we conclude that $s=n$, i.e., $a_i\neq a_j$ for $i\neq j$.
\medskip

Next, we show that (\ref{eq:defeqvar2}) holds for $i=1,\ldots,n$. Note that
$$\sum_{j=1}^nr_j a_j^i=0$$
holds for $i=1,\ldots,n-1$. For $i=0$, we have $r_1+\cdots+r_n=|\mathbf{r}|$. Hence, we obtain
$$\sum_{j=1}^nr_j a_j^i=\delta_{i,0}|\mathbf{r}|$$
for $i=0,1,\ldots,n-1$. Since $a_j\neq0$ for $j=1,\ldots,n$ and $a_i\neq a_j$ for $i\neq j$, the matrix $(a_j^i)_{0\leqslant i\leqslant n-1\atop 1\leqslant j\leqslant n}$ is invertible. Hence,
\begin{align}
\begin{pmatrix}
r_1\\r_2\\\vdots\\ r_n
\end{pmatrix}
=
\begin{pmatrix}
1&1&\cdots&1\\
a_1&a_2&\cdots&a_n\\
\vdots&\vdots&\vdots&\vdots\\
a_1^{n-1}&a_2^{n-1}&\cdots&a_n^{n-1}
\end{pmatrix}^{-1}
\begin{pmatrix}
|\mathbf{r}|\\0\\\vdots\\0
\end{pmatrix},\label{eq:vondemonde}
\end{align}
which yields that
$$r_i=\frac{\prod_{j:j\neq i}a_j}{\prod_{j:j\neq i}(a_j-a_i)}|\mathbf{r}|,$$
i.e., (\ref{eq:defeqvar2}) holds for $i=1,\ldots,n$.
\bigskip

Conversely, we suppose that $\mathbf{a}\in(\Bbb{C}^{\times})^n$ satisfying (\ref{eq:defeqvar2}). Then it is obvious that $a_i\neq a_j$ for $i\neq j$. It follows that $(a_j^i)_{0\leqslant i\leqslant n-1\atop 1\leqslant j\leqslant n}$ is invertible. Hence, (\ref{eq:vondemonde}) implies that $\mathbf{a}\in V(\mathbf{r})$. This completes the proof.
\end{proof}

Now, we proceed to prove Proposition~\ref{prop:newsubalg}.

\begin{proof}[Proof of the Proposition~\ref{prop:newsubalg}] It suffices to verify the equality (\ref{eq:comin2d}). We first deduce that
$$[P_{\mu}(t)\partial,Q_{\mu}(t)\partial]
=Q_{\mu}(t)
\left(-|\mathbf{r}|\prod_{j=1}^n(t-a_j)+\sum\limits_{l=1}^nr_lt\prod_{j:j\neq l}(t-a_j)\right)\partial.$$
Let
$$F(t):=-|\mathbf{r}|\prod_{j=1}^n(t-a_j)+\sum\limits_{l=1}^nr_lt\prod_{j:j\neq l}(t-a_j).$$
Then
$$F(0)=(-1)^{n+1}|\mathbf{r}|a_1\cdots a_n=:c_{\mu}.$$
On the other hand, by Lemma~\ref{lem:propertya}, we deduce from $\mathbf{a}\in V(\mathbf{r})^{\times}$ that
$$F(a_i)=r_ia_i\prod_{j:j\neq i}(a_i-a_j)=(-1)^{n-1}|\mathbf{r}|a_i\prod_{j:j\neq i}a_j=c_{\mu}.$$
Now, $F(t)$ is a polynomial of degree at most $n$, taking the same value $c_{\mu}$ at $n+1$ distinct points: $0, a_1,\ldots, a_n$. Hence, $F(t)=c_{\mu}$
is a constant number. This completes the proof.
\end{proof}

%
%
%

\begin{proposition}[Uniqueness]
\label{prop:noneq}
\quad\par
\begin{enumerate}
\item For $m,m'\in\Bbb{Z}^{\times}$, $\frak{z}(m)=\frak{z}(m')$ if and only if $m=m'$.
\item For $\mu:=(n,k,\mathbf{r},\mathbf{a})$ and $\mu':=(n',k',\mathbf{r}',\mathbf{a}')\in\Sigma$, the two subalgebras
$$\frak{s}(\mu)=\frak{s}(\mu')$$
if and only if $n=n'$, $k=k'$, and there is a permutation $\sigma$ of $\{1,\ldots,n\}$ such that
\begin{equation}
r_i'=r_{\sigma(i)},\text{ and }a_i'=a_{\sigma(i)},\label{eq:Snact}
\end{equation}
for $i=1,\ldots,n$.
\item For $m\in\Bbb{Z}^{\times}$ and $\mu\in\Sigma$, the two subalgebras $\frak{z}(m)$ and $\frak{s}(\mu)$ are not equal.
\end{enumerate}
\end{proposition}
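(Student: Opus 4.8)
The plan is to prove the three statements by exploiting the structure of the two generators—in particular their degrees $\deg_1$ and $\deg_2$—to recover the defining data from the subalgebra itself.

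For part (i), I would simply observe that $\frak{z}(m)=\operatorname{span}_{\Bbb C}\{\partial,t^m\partial\}$ contains a unique (up to scalar) element lying in $\frak{d}_+$ if $m>0$ (resp. $\frak{d}_-$ if $m<0$), namely $t^m\partial$, and the exponent $m$ is determined as $\deg_1$ (resp. $\deg_2$) of that element. More carefully: the element $\partial\in\frak{d}_0$ is characterized inside $\frak{z}(m)$ as the unique element (up to scalar) with $\deg_1=\deg_2=0$, and then $m$ is read off from the other basis vector; hence $\frak{z}(m)=\frak{z}(m')\Rightarrow m=m'$. The converse is trivial.

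For part (ii), the key point is that the data $(n,k,\mathbf r,\mathbf a)$ is encoded in the polynomials $P_\mu$ and $Q_\mu$, and these polynomials are essentially intrinsic to $\frak{s}(\mu)$. First I would pin down which elements of $\frak{s}(\mu)$ are scalar multiples of $P_\mu(t)\partial$ versus $Q_\mu(t)\partial$: from $[P_\mu\partial,Q_\mu\partial]=c_\mu Q_\mu\partial$ with $c_\mu\neq0$, the line $\Bbb C Q_\mu(t)\partial$ is the derived subalgebra $[\frak{s}(\mu),\frak{s}(\mu)]$, hence canonical; and $P_\mu(t)\partial$ is then determined up to scalar and up to adding a multiple of $Q_\mu(t)\partial$. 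Next, normalize: $P_\mu$ is the unique monic polynomial in $\Bbb C[t]$ (not merely $\Bbb C[t^{\pm1}]$) with no constant-term constraint such that $P_\mu(t)\partial\in\frak{s}(\mu)$ and $P_\mu(t)\partial\notin\Bbb C Q_\mu(t)\partial$ — here I must be careful because a generic element $\alpha P_\mu\partial+\beta Q_\mu\partial$ need not be a polynomial times $\partial$, but $Q_\mu$ has a genuinely negative lowest degree when $|\mathbf r|>0$, so among elements of $\frak{s}(\mu)$ lying in $\Bbb C[t]\partial$, the polynomial ones are exactly $\Bbb C P_\mu(t)\partial$ (if $\deg Q_\mu\geq 0$ has negative valuation) — this requires the observation $|\mathbf r|\geq k\geq 1$ so $t^{-|\mathbf r|}$ is a genuine negative power. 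Thus $n=\deg P_\mu$ and the multiset $\{a_1,\dots,a_n\}$ is the multiset of roots of $P_\mu$. Then $Q_\mu$, now determined up to scalar, has the form $t^{-|\mathbf r|}\prod_{i=1}^k(t-a_i)^{r_i+1}$; comparing with the analogous expression from $\mu'$ and using that the roots $a_i$ are distinct (Lemma~\ref{lem:propertya}) forces the same multiset $\{(a_1,r_1),\dots,(a_k,r_k)\}$ of (root, multiplicity$-1$) pairs among the roots that actually occur in $Q_\mu$, while the remaining $n-k$ values $a_{k+1},\dots,a_n$ (those with $r_i=-1$) are exactly the roots of $P_\mu$ not appearing in $Q_\mu$. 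Matching $|\mathbf r|$ from the $t$-valuation of $Q_\mu$ and matching all the $r_i$'s componentwise then yields the permutation $\sigma$; the converse direction is immediate since $P_\mu,Q_\mu$ are symmetric functions of the indexed data in the appropriate sense.

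For part (iii), I would argue by contradiction: if $\frak{z}(m)=\frak{s}(\mu)$, then $\frak{s}(\mu)$ would be contained in $\frak{d}_0\oplus\frak{d}_+$ or $\frak{d}_0\oplus\frak{d}_-$. But $P_\mu(t)=(t-a_1)\cdots(t-a_n)$ with all $a_i\neq0$ has nonzero constant term, so $\deg_2(P_\mu(t)\partial)=0$ while $\deg_1(P_\mu(t)\partial)=n\geq1$; and $Q_\mu(t)\partial$ has $\deg_2=-|\mathbf r|<0$. So $\frak{s}(\mu)$ contains an element with strictly positive $\deg_1$ and an element with strictly negative $\deg_2$, hence cannot sit in $\frak{d}_0\oplus\frak{d}_+$ nor in $\frak{d}_0\oplus\frak{d}_-$ — unless both features occur in a single element, but $\frak{z}(m)$'s elements are spanned by $\partial$ and $t^m\partial$, all of whose $\deg_1$ and $\deg_2$ have the same sign as $m$ (or are zero). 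A short linear-algebra check rules this out, giving the contradiction.

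I expect the main obstacle to be the normalization step in part (ii): carefully justifying that $P_\mu(t)\partial$ and $Q_\mu(t)\partial$ can be canonically recovered (as lines) from the abstract subalgebra $\frak{s}(\mu)$, i.e., that no nontrivial linear combination $\alpha P_\mu\partial+\beta Q_\mu\partial$ with $\alpha,\beta\neq 0$ is proportional to either $P_\mu\partial$ or $Q_\mu\partial$ (clear) and that the polynomial-versus-Laurent distinction cleanly separates them — which hinges on $|\mathbf r|\geq 1$ and on $\deg P_\mu=n<$ or the interplay of degrees; this is where one must invoke the precise constraints defining $\Gamma(n,k)$ and $\Sigma$.
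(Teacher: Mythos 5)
Your proposal is correct and follows essentially the same route as the paper: you identify $\Bbb{C}Q_{\mu}(t)\partial$ as the derived subalgebra, use $|\mathbf{r}|\geqslant k\geqslant 1$ (i.e.\ the strictly negative $\deg_2$ of $Q_{\mu}(t)\partial$) to recover $P_{\mu}(t)\partial$ up to scalar and monic normalization, and then read off $(n,k,\mathbf{r},\mathbf{a})$ from the roots and multiplicities of $P_{\mu}$ and $Q_{\mu}$, with part (iii) handled by the same $\deg_1$/$\deg_2$ considerations. The only cosmetic difference is that in (iii) the paper uses the single element $Q_{\mu}(t)\partial$, which simultaneously has $\deg_1=n>0$ and $\deg_2=-|\mathbf{r}|<0$, where you use two elements.
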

\begin{proof}
(i) is obvious since $\partial, t^{m}\partial, t^{m'}\partial$ are linear independent if $m\neq m'\in\Bbb{Z}^{\times}$.
\medskip

(ii) Recall that $\frak{s}(\mu)$ (resp. $\frak{s}(\mu')$) has a basis $\{P_{\mu}(t)\partial, Q_{\mu}(t)\partial\}$ (resp. $\{P_{\mu'}(t)\partial$, $Q_{\mu'}(t)\partial\}$). We first claim that
$\frak{s}(\mu)=\frak{s}(\mu')$ if and only if $P_{\mu}(t)=P_{\mu'}(t)$ and $Q_{\mu}(t)=Q_{\mu'}(t)$.

It obvious that $\frak{s}(\mu)=\frak{s}(\mu')$ if $P_{\mu}(t)=P_{\mu'}(t)$ and $Q_{\mu}(t)=Q_{\mu'}(t)$. Conversely, we assume that $\frak{s}(\mu)=\frak{s}(\mu')$. Note that
$Q_{\mu}(t)\partial$ (resp. $Q_{\mu'}(t)\partial$) is a basis of the 1-dimensional derived algebra $[\frak{s}(\mu),\frak{s}(\mu)]$ (resp. $[\frak{s}(\mu'),\frak{s}(\mu')]$) and both $Q_{\mu}(t)$ and $Q_{\mu'}(t)$ are monic. It follows that $Q_{\mu}(t)=Q_{\mu'}(t)$. Since $\frak{s}(\mu)=\frak{s}(\mu')$, there are $\alpha,\beta\in\Bbb{C}$ such that
$$P_{\mu'}(t)\partial=\alpha P_{\mu}(t)\partial+\beta Q_{\mu}(t)\partial.$$
Note that
$$\deg_2(P_{\mu}(t)\partial)=\deg_2(P_{\mu'}(t)\partial)=0,$$
and $\deg_2(Q_{\mu}(t)\partial)=-|\mathbf{r}|\leqslant-k$ since $\mathbf{r}\in\Gamma(n,k)$, we deduce that
$$\deg_2(\alpha P_{\mu}(t)\partial+\beta Q_{\mu}(t)\partial)<0$$
if $\beta\neq0$. This contradicts the fact that $\deg_2(P_{\mu'}(t)\partial)=0$. Hence, $\beta=0$. Now, both $P_{\mu}(t)$ and $P_{\mu'}(t)$ are monic, we obtain that $P_{\mu}(t)=P_{\mu'}(t)$.
\medskip

Next we show that $P_{\mu}(t)=P_{\mu'}(t)$ and $Q_{\mu}(t)=Q_{\mu'}(t)$ if and only if $n=n'$, $k=k'$ and there is a permutation $\sigma$ of $\{1,\ldots,n\}$ such that
$$r_i'=r_{\sigma(i)},\text{ and }a_i'=a_{\sigma(i)},$$
for $i=1,\ldots,n$. This follows from the fact that $n$ (resp. $n'$) is the degree of $P_{\mu}(t)$ (resp. $P_{\mu'}(t)$), $k$ (resp. $k'$) is the number of distinct nonzero roots of $Q_{\mu}(t)$ (resp. $Q_{\mu'}(t)$), $a_1,\ldots,a_n$ (resp. $a'_1,\ldots,a'_{n'}$) are distinct roots of $P_{\mu}(t)$ (resp. $P_{\mu'}(t)$), and $r_i+1$ (resp. $r'_i+1$) is the multiplicity of $a_i$ (resp. $a'_i$) as a root of $Q_{\mu}(t)$ (resp. $Q_{\mu'}(t)$) for $i=1,\ldots,n$.
\medskip

(iii) For $m>0$ (resp. $m<0$), $\frak{z}(m)\subseteq\frak{d}_0\oplus\frak{d}_+$ (resp. $\frak{z}(m)\subseteq\frak{d}_0\oplus\frak{d}_-$). However, for $\mu\in\Sigma$,
$$\deg_2(Q_{\mu}(t)\partial)=-|\mathbf{r}|\leqslant-k,\text{ and }\deg_1(Q_{\mu}(t)\partial)=n>0.$$
Hence, $\frak{s}(\mu)\not\subseteq\frak{d}_0\oplus\frak{d}_{\pm}$, which yields that $\frak{z}(m)$ is not equal to $\frak{s}(\mu)$.
\end{proof}

\section{Classification of two-dimensional subalgebras of $\frak{d}$}
\label{sec:classification}

In this section, we focus on proving that every two-dimensional subalgebra of $\frak{d}$ is exactly equal to one of those given in Section~\ref{sec:example}.

\begin{lemma}[{c.f. Lemma~3.3 of \cite{Ng2000witt}}]
\label{lem:normalsubalg}
Let $\frak{s}$ be a two-dimensional subalgebra of $\frak{d}$. If $\frak{s}\subseteq\frak{d}_0\oplus\frak{d}_+$ (resp. $\frak{s}\subseteq\frak{d}_0\oplus\frak{d}_-$), then
$\frak{s}$ is equal to $\frak{z}(m)$ (resp. $\frak{z}(-m)$) for some positive integer $m$.\qed
\end{lemma}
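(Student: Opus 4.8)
The goal is to locate inside $\frak{s}$ a nonzero scalar multiple of $\partial$ and a nonzero scalar multiple of $t^{m}\partial$ for some $m\in\Bbb{N}$; then $\frak{s}=\mathrm{span}_{\Bbb{C}}\{\partial,t^{m}\partial\}=\frak{z}(m)$ for dimension reasons. It suffices to treat the case $\frak{s}\subseteq\frak{d}_0\oplus\frak{d}_+$: the Lie algebra automorphism $\tau$ of $\frak{d}$ induced by $t\mapsto t^{-1}$ satisfies $\tau(f(t)\partial)=-f(t^{-1})\partial$, so it fixes $\frak{d}_0$, interchanges $\frak{d}_+$ and $\frak{d}_-$, and maps $\frak{z}(m)$ onto $\frak{z}(-m)$; applying $\tau$ to the first case then yields the second.

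Assume $\frak{s}\subseteq\frak{d}_0\oplus\frak{d}_+$. Since the only commutative subalgebras of $\frak{d}$ are one-dimensional, $\frak{s}$ is non-commutative, so it has a basis $\{X,Y\}$ with $[X,Y]=cY$ for some $c\in\Bbb{C}^{\times}$ and with $[\frak{s},\frak{s}]=\Bbb{C}Y$. Writing $X=F(t)\partial$ and $Y=G(t)\partial$ with $F,G\in\Bbb{C}[t]$, relation~(\ref{eq:diffeq}) becomes $t(FG'-GF')=cG$; and since $[\frak{d}_0\oplus\frak{d}_+,\frak{d}_0\oplus\frak{d}_+]\subseteq\frak{d}_+$, we have $G(0)=0$, hence $\deg_2(G)\geqslant1$. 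The computational core is the dichotomy obtained by comparing the lowest (resp.\ highest) degree terms on the two sides of $t(FG'-GF')=cG$: the extremal-degree parts of $FG'$ and $GF'$ cancel precisely when the corresponding extremal degrees of $F$ and $G$ agree, and a short calculation then shows that $\deg_2(F)\in\{0,\deg_2(G)\}$ and $\deg_1(F)\in\{0,\deg_1(G)\}$. I would use this first to rule out $\frak{s}\subseteq\frak{d}_+$, which must be excluded since $\frak{z}(m)\not\subseteq\frak{d}_+$: if $\frak{s}\subseteq\frak{d}_+$ then $F(0)=0$ as well, so $\deg_2(F)\geqslant1$ and hence $\deg_2(F)=\deg_2(G)=:d$; now replacing $X$ by $X-\lambda Y$ with $\lambda\in\Bbb{C}^{\times}$ chosen to cancel the degree-$d$ term of $F$ preserves both $[X,Y]=cY$ and the fact that $\{X,Y\}$ is a basis, but raises $\deg_2(F)$ above $d=\deg_2(G)$, contradicting $\deg_2(F)\in\{0,\deg_2(G)\}$. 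Hence $\frak{s}\not\subseteq\frak{d}_+$; since $[\frak{s},\frak{s}]=\Bbb{C}Y\subseteq\frak{d}_+$, the vector $Y$ spans $\frak{s}\cap\frak{d}_+$, and taking for $X$ any element of $\frak{s}\setminus\frak{d}_+$ we may assume $a_0:=F(0)\neq0$.

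Next I would make $F$ constant. By the dichotomy $\deg_1(F)\in\{0,\deg_1(G)\}$; if $\deg_1(F)=\deg_1(G)$, then replacing $X$ by $X-\lambda Y$ with $\lambda\in\Bbb{C}^{\times}$ cancelling the top term of $F$ (this keeps $[X,Y]=cY$, keeps $\{X,Y\}$ a basis, and keeps $F(0)=a_0$ because $G(0)=0$) drops $\deg_1(F)$ strictly below $\deg_1(G)$, forcing $\deg_1(F)=0$. So $F=a_0$ and $X=a_0\partial\in\frak{s}$. The relation $[a_0\partial,G\partial]=cG\partial$ now reads $a_0\,tG'=cG$, and comparing coefficients of $t^{j}$ gives $b_j(a_0j-c)=0$ for each coefficient $b_j$ of $G$; thus $G$ is a monomial $G=b\,t^{m}$, and $m:=c/a_0$ is a positive integer because $G\neq0$ and $G(0)=0$. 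Therefore $\partial=a_0^{-1}X$ and $t^{m}\partial=b^{-1}Y$ both lie in $\frak{s}$, whence $\frak{s}=\mathrm{span}_{\Bbb{C}}\{\partial,t^{m}\partial\}=\frak{z}(m)$.

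The step I expect to need the most care is the bookkeeping in the two degree reductions: one has to confirm that subtracting a scalar multiple of $Y$ from $X$ never turns $X$ into a multiple of $Y$ (so that $\{X,Y\}$ remains a basis) and never disturbs $[X,Y]=cY$, and that the dichotomy $\deg_i(F)\in\{0,\deg_i(G)\}$ is applied only once a reduction has already separated the two relevant extremal degrees, so that it yields a genuine contradiction rather than a tautology. The rest — the bracket identity $[a_0\partial,G\partial]=a_0(tG')\partial$ and the description of the polynomial solutions of $a_0tG'=cG$ — is routine.
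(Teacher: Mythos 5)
Your argument is correct and complete, and the comparison here is necessarily one-sided: the paper offers no proof of this lemma at all --- it is stated with a citation to Lemma~3.3 of \cite{Ng2000witt} and left unproved. What you have produced is a self-contained elementary proof in exactly the style the paper uses elsewhere: your reduction of the $\frak{d}_0\oplus\frak{d}_-$ case via $t^{l}\partial\mapsto -t^{-l}\partial$ is the same automorphism $\omega$ that appears in the proof of Theorem~\ref{thm:2dsubalgofd}, and your extremal-degree dichotomy $\deg_i(F)\in\{0,\deg_i(G)\}$ plays the role that the $\deg_1$/$\deg_2$ comparisons play there. The delicate points you flagged do check out: the dichotomy is a correct consequence of comparing the lowest (resp.\ highest) terms of $t(FG'-GF')$ with those of $cG$, since when the relevant extremal degrees of $F$ and $G$ differ the extremal term of the bracket survives in degree $\deg_i(F)+\deg_i(G)$ and must therefore equal $\deg_i(G)$, forcing $\deg_i(F)=0$; the exclusion of $\frak{s}\subseteq\frak{d}_+$ works because after subtracting $\lambda Y$ the new first generator still lies in $\frak{d}_+$ (so its $\deg_2$ is at least $1$), cannot vanish since $\{X,Y\}$ is a basis, and has $\deg_2$ strictly larger than $\deg_2(G)$, violating the dichotomy; replacing $X$ by an element of $\frak{s}\setminus\frak{d}_+$ only rescales the eigenvalue $c$ (any such element is $\alpha X+\beta Y$ with $\alpha\neq0$, and $[\alpha X+\beta Y,Y]=\alpha cY$), which is harmless; and $a_0tG'=cG$ indeed forces $G=b\,t^{m}$ with $m=c/a_0\in\Bbb{N}$ because $G\neq0$ and $G(0)=0$. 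The one outside input you use, that $\frak{d}$ has no two-dimensional commutative subalgebra, is also taken for granted by the paper in its introduction (and follows at once from (\ref{eq:diffeq}) with $c=0$, since $FG'-GF'=0$ makes $G$ a scalar multiple of $F$). So your proposal is correct, and it supplies an argument the paper itself delegates to the literature, using only the paper's own toolkit.
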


\begin{lemma}
\label{lem:eigenpoly}
Let $s\in\Bbb{Z}$ and $F(t),G(t)\in\Bbb{C}[t]$ satisfying $F(0)\neq0, G(0)\neq0$. If there is an element $c\in\Bbb{C}^{\times}$ such that
\begin{equation}
[F(t)\partial,t^{s}G(t)\partial]=c\, t^{s}G(t)\partial,\label{eq:eigenpoly1}
\end{equation}
then the following statements hold:
\begin{enumerate}
\item Every root of $G(t)$ is a root of $F(t)$.
\item $F(t)$ has no multiple root.
\item $G(t)$ has no simple root.
\end{enumerate}
\end{lemma}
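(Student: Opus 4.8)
The plan is to turn the bracket relation (\ref{eq:eigenpoly1}) into a single polynomial identity and then read off the three claims from a local analysis at each nonzero point. Applying the bracket formula $[A(t)\partial,B(t)\partial]=t(AB'-A'B)\partial$ underlying (\ref{eq:diffeq}) with $B(t)=t^{s}G(t)$, one finds
$$[F(t)\partial,t^{s}G(t)\partial]=t^{s}\big(sFG+t(FG'-F'G)\big)\partial ,$$
so that (\ref{eq:eigenpoly1}) is equivalent to the polynomial identity
$$sF(t)G(t)+t\big(F(t)G'(t)-F'(t)G(t)\big)=c\,G(t).\qquad(\star)$$
Because $F(0)\neq0$ and $G(0)\neq0$, every root of $F$ and of $G$ lies in $\Bbb{C}^{\times}$; in particular $t$ is a unit in the local ring at each such root, and it is this feature that makes the argument go through.

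The core of the proof is the following dichotomy. Fix $\beta\in\Bbb{C}^{\times}$ and put $p:=\mathrm{ord}_{\beta}(F)$ and $q:=\mathrm{ord}_{\beta}(G)$ (orders of vanishing at $\beta$). Then $(\star)$ forces \emph{either} $p=q=0$ \emph{or} $p=1$ and $q\neq1$. To prove this I would write $F=(t-\beta)^{p}u$, $G=(t-\beta)^{q}v$ with $u,v$ units at $\beta$; a direct computation, or the logarithmic-derivative identity $FG'-F'G=FG\,(G'/G-F'/F)$, gives
$$FG'-F'G=(t-\beta)^{p+q-1}\big((q-p)\,uv+(t-\beta)(uv'-u'v)\big),$$
so that $\mathrm{ord}_{\beta}(FG'-F'G)=p+q-1$ when $p\neq q$ and $\geqslant p+q$ when $p=q$. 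Since $\mathrm{ord}_{\beta}(sFG)\geqslant p+q$ and $t$ is a unit at $\beta$, comparing orders on the two sides of $(\star)$---whose right-hand side has order exactly $q$ because $c\neq0$---gives $p+q-1=q$, hence $p=1$, in the case $p\neq q$, and $q\geqslant 2p$ with $q=p$, hence $p=q=0$, in the case $p=q$.

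Granting the dichotomy, the three statements are immediate. For (i): a root of $G$ has $q\geqslant1$, so the first alternative is impossible, forcing $p=1$; thus it is a root of $F$. For (ii): a root of $F$ has $p\geqslant1$, so again $p=1$, i.e.\ $F$ has no multiple root. For (iii): a simple root of $G$ would have $q=1$, which is incompatible with the dichotomy (neither $p=q=0$ nor ``$p=1$ and $q\neq1$'' is possible when $q=1$), so $G$ has no simple root. I expect the only real work to be the bookkeeping in the local order computation for $FG'-F'G$, together with the initial realisation that (i), (ii), (iii) are three facets of one statement; the rest is routine. (One may note at the outset that if $G$ is constant all three claims are vacuous, so the degenerate possibility $FG'-F'G\equiv0$---which by $(\star)$ occurs only when $G$ is constant---causes no difficulty.)
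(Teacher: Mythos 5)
Your proof is correct. You reduce (\ref{eq:eigenpoly1}) to the same polynomial identity $sFG+t(FG'-F'G)=cG$ that the paper works with, but from there the routes differ in organization: the paper establishes (i), (ii), (iii) by three separate divisibility arguments at a root $a$ (for (ii) an iterated argument giving $(t-a)^{l+l'-1}\mid G$ and a contradiction, for (iii) an explicit factorization $F=(t-a)F_1$, $G=(t-a)G_1$), whereas you prove one local dichotomy at each $\beta\in\Bbb{C}^{\times}$ --- either $\mathrm{ord}_{\beta}F=\mathrm{ord}_{\beta}G=0$, or $\mathrm{ord}_{\beta}F=1$ and $\mathrm{ord}_{\beta}G\neq1$ --- from the exact order computation $\mathrm{ord}_{\beta}(FG'-F'G)=p+q-1$ when $p\neq q$ (and $\geqslant p+q$ when $p=q$), using that $t$ is a unit at $\beta$ and that $\mathrm{ord}_{\beta}(cG)=q$ since $c\neq0$. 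Your bookkeeping checks out, including the edge cases $s=0$, $p=q=0$, and $FG'-F'G\equiv0$ (which, as you note, forces $F$ and $G$ constant so that all three claims are vacuous), and the three statements do follow immediately from the dichotomy. What your version buys is a single uniform computation replacing three ad hoc arguments, and it records the complete local structure of the pair $(F,G)$ at each point in one statement --- essentially the same information that resurfaces in the proof of Theorem~\ref{thm:2dsubalgofd} as the exponents $r_i+1$; what the paper's version buys is that each step uses only elementary divisibility of polynomials, never needing the exact value of $\mathrm{ord}_{\beta}(FG'-F'G)$.
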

\begin{proof}
The equation (\ref{eq:eigenpoly1}) is equivalent to
\begin{equation}
sF(t)G(t)+t(F(t)G'(t)-G(t)F'(t))=c G(t).\label{eq:eigenpoly2}
\end{equation}

(i) Suppose $a$ is a root of $G(t)$ of multiplicity $l\geqslant1$. By (\ref{eq:eigenpoly2}), $(t-a)^l|G(t)$ implies that
$$(t-a)^l|tF(t)G'(t).$$
Since the multiplicity of $a\neq0$ in $G(t)$ is $l$, we deduce that $(t-a)^{l-1}|G'(t)$ and $(t-a)^l\not|G'(t)$. Hence, $(t-a)|F(t)$, i.e., $a$ is a root of $F(t)$.
\medskip

(ii) Suppose $a$ is a root of $F(t)$ of multiplicity $l\geqslant2$, and the multiplicity of $a$ in $G(t)$ is $l'\geqslant0$.
Since $(t-a)|F(t)$ and $(t-a)|F'(t)$, the equality (\ref{eq:eigenpoly2}) implies that $(t-a)|G(t)$, i.e., $l'\geqslant1$.

Now, $(t-a)^{l+l'-1}$ divides $F(t)G(t)$, $F'(t)G(t)$ and $F(t)G'(t)$. Applying (\ref{eq:eigenpoly2}) again, we deduce that $(t-a)^{l+l'-1}|G(t)$. Hence, the multiplicity of $a$ in $G(t)$ is at least $l+l'-1>l'$. This is a contradiction.
\medskip

(iii) Suppose $a$ is a simple root of $G(t)$. Then $G(t)=(t-a)G_1(t)$, where $G_1(a)\neq0$. By (ii), $F(t)=(t-a)F_1(t)$, where $F_1(a)\neq0$. We deduce from (\ref{eq:eigenpoly2}) that
$$s(t-a)^2F_1(t)G_1(t)+t(t-a)^2(F_1(t)G_1'(t)-G_1(t)F_1'(t))=c(t-a)G_1(t).$$
It follows that $(t-a)|G_1(t)$, which contradicts that $G_1(a)\neq0$. Hence, $G(t)$ has no simple root.
\end{proof}

\begin{theorem}
\label{thm:2dsubalgofd}
Let $\frak{a}$ be a two-dimensional subalgebra of $\frak{d}$. Then $\frak{a}$ is equal to either $\frak{z}(m)$ for some $m\in\Bbb{Z}^{\times}$, or $\frak{s}(\mu)$ for some $\mu:=(n,k,\mathbf{r},\mathbf{a})\in\Sigma$.
\end{theorem}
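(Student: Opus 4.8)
The plan is to start from a two-dimensional subalgebra $\frak{a}$, which (since $\frak{d}$ has no commutative subalgebra of dimension $>1$) must be non-commutative, and hence has a basis $\{X,Y\}$ with $[X,Y]=cY$ for some $c\in\Bbb{C}^{\times}$. Write $X=F(t)\partial$ and $Y=G(t)\partial$ with $F,G\in\Bbb{C}[t^{\pm1}]$. If $\frak{a}\subseteq\frak{d}_0\oplus\frak{d}_+$ or $\frak{a}\subseteq\frak{d}_0\oplus\frak{d}_-$, then Lemma~\ref{lem:normalsubalg} already gives $\frak{a}=\frak{z}(m)$ for suitable $m\in\Bbb{Z}^{\times}$, so we may assume from now on that $\frak{a}$ is contained in neither. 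I would first normalize $Y$ so that $G(t)=t^{s}\tilde G(t)$ with $\tilde G\in\Bbb{C}[t]$, $\tilde G(0)\neq0$, and $\tilde G$ monic, where $s=\deg_2(Y)$; the assumption that $\frak{a}\not\subseteq\frak{d}_0\oplus\frak{d}_+$ will force $s\le -1$ (after possibly replacing $X$), while $\deg_1(Y)>0$; set $n:=\deg\tilde G + s = \deg_1(Y)$ so $n\ge1$. Next I would replace $X$ by a suitable element $X-\lambda Y$ so that $F(t)$ becomes an honest polynomial with $F(0)\neq0$: the relation $[X,Y]=cY$ restricts which powers of $t$ can occur in $F$, and one checks $\deg_2(F)\ge 0$; if $\deg_2(F)>0$ then $c$ would be forced to vanish, contradiction, so $F(0)\neq0$, and similarly $\deg_1(F)\le n$. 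After rescaling $X$ we may take $F$ monic as well, so $\deg F = n$ and $F(0)\neq 0$.

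Having arranged $F(t)\in\Bbb{C}[t]$ monic of degree $n$ with $F(0)\neq0$, and $G(t)=t^{s}\tilde G(t)$ with $\tilde G$ monic, $\tilde G(0)\neq0$, and $s\le-1$, I would invoke Lemma~\ref{lem:eigenpoly} with the polynomial $\tilde G$ in place of $G$: every root of $\tilde G$ is a root of $F$, $F$ has only simple roots, and $\tilde G$ has no simple root. Write $F(t)=\prod_{j=1}^n(t-a_j)$ with the $a_j$ distinct and nonzero, and let $a_1,\dots,a_k$ be those roots that actually occur in $\tilde G$ (so $\tilde G$ divides a product of $(t-a_j)$'s for $j\le k$). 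Since $\tilde G$ has no simple root, the multiplicity of each $a_j$ in $\tilde G$ is $\ge2$; the key claim I would then prove by a divisibility/degree analysis of equation (\ref{eq:eigenpoly2}) is that this multiplicity is \emph{exactly} $r_j+1$ where $r_j$ is determined by matching degrees, i.e. that $\tilde G(t)=\prod_{j=1}^{k}(t-a_j)^{r_j+1}$ with each $r_j\ge1$, and moreover (comparing the $t$-degree) that $\deg_1(G)=n$ gives $s+\sum_{j=1}^k(r_j+1)=n$, hence $|\mathbf r|:=\sum r_j = n-s-k = -s$ together with setting $r_{k+1}=\dots=r_n=-1$, which yields $|\mathbf r|=r_1+\dots+r_n$. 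The bound $|\mathbf r|=-s\ge1$ is automatic, but I also need $|\mathbf r|\ge k$; this should follow because each occurring multiplicity $r_j+1\ge2$ forces $\sum_{j=1}^k(r_j+1)\ge 2k$, hence $|\mathbf r|+k\ge 2k$, i.e. $|\mathbf r|\ge k$. Thus $\mathbf r=(r_1,\dots,r_n)\in\Gamma(n,k)$ and $G(t)=t^{-|\mathbf r|}\prod_{j=1}^k(t-a_j)^{r_j+1}$, matching $Q_\mu$; likewise $F=P_\mu$.

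It remains to show $\mathbf a=(a_1,\dots,a_n)\in V(\mathbf r)^{\times}$, i.e. that the defining equations (\ref{eq:defeqvar}) hold. For this I would substitute $F=P_\mu$ and $G=Q_\mu$ back into the commutator identity $[F\partial,G\partial]=cG\partial$; expanding exactly as in the proof of Proposition~\ref{prop:newsubalg}, $[P_\mu\partial,Q_\mu\partial]=Q_\mu\cdot H(t)\,\partial$ where $H(t)=-|\mathbf r|\prod_{j=1}^n(t-a_j)+\sum_{l=1}^n r_l\, t\prod_{j\neq l}(t-a_j)$ (the exponent bookkeeping $r_l$ vs. $r_l+1$ works out because the $j>k$ factors contribute $r_j=-1$). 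The identity then says $H(t)=c$ is the constant $c_\mu=(-1)^{n+1}|\mathbf r|a_1\cdots a_n$ (its value at $t=0$). Evaluating $H$ at $t=a_i$ gives $r_i a_i\prod_{j\neq i}(a_i-a_j)=c_\mu$ for each $i$, which by Lemma~\ref{lem:propertya} is exactly the condition characterizing $\mathbf a\in V(\mathbf r)$; since all $a_i\neq0$, in fact $\mathbf a\in V(\mathbf r)^\times$. Hence $\mu:=(n,k,\mathbf r,\mathbf a)\in\Sigma$ and $\frak a=\mathrm{span}_{\Bbb C}\{P_\mu\partial,Q_\mu\partial\}=\frak s(\mu)$, completing the proof.

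The main obstacle I anticipate is the normalization step in the first paragraph together with the multiplicity bookkeeping in the second: one must be careful that the reductions $F\leftrightarrow F-\lambda Y$ and rescalings are legitimate (they change the basis but not $\frak a$), that they genuinely force $F$ to be a polynomial with $F(0)\neq0$ rather than merely a Laurent polynomial, and that the degree/multiplicity matching pins down $\tilde G$ completely as $\prod(t-a_j)^{r_j+1}$ with no extra freedom — in particular checking the inequality $|\mathbf r|\ge k$ and that no root of $\tilde G$ sits outside $\{a_1,\dots,a_k\}$. Once the shape of $F$ and $G$ is rigid, the passage to $V(\mathbf r)^\times$ via Lemma~\ref{lem:propertya} is a direct evaluation and poses no difficulty.
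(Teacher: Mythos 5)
Your overall route is the same as the paper's (reduce to the non-graded case, normalize so that $X=F(t)\partial$ with $F\in\Bbb{C}[t]$, $F(0)\neq0$, apply Lemma~\ref{lem:eigenpoly}, and then read off $\mathbf{a}\in V(\mathbf{r})^{\times}$ from the constancy of the polynomial $-|\mathbf{r}|\prod_j(t-a_j)+\sum_l r_l t\prod_{j\neq l}(t-a_j)$ via Lemma~\ref{lem:propertya}), and those parts are essentially correct, modulo the normalization details you yourself flag, which can be carried out exactly as in the paper by comparing $\deg_1$ and $\deg_2$ in $[X,Y]=cY$. However, there is one genuine gap: your justification of $|\mathbf{r}|\geqslant k$ is arithmetically wrong. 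You argue that the multiplicities $r_j+1\geqslant 2$ give $\sum_{j=1}^{k}(r_j+1)\geqslant 2k$ and "hence $|\mathbf{r}|+k\geqslant 2k$"; but $|\mathbf{r}|=\sum_{j=1}^{k}r_j-(n-k)$, so $|\mathbf{r}|+k=\sum_{j=1}^{k}(r_j+1)-(n-k)$, and your inequality only yields $|\mathbf{r}|\geqslant 2k-n$, which is weaker than $|\mathbf{r}|\geqslant k$ whenever $n>k$. Nothing in your argument excludes, say, a configuration with $n=5$, $k=2$, $r_1=r_2=2$, where each $r_j\geqslant 1$ and $|\mathbf{r}|=-s=1\geqslant 1$ but $|\mathbf{r}|<k$; without $|\mathbf{r}|\geqslant k$ the tuple $\mathbf{r}$ does not lie in $\Gamma(n,k)$, so you cannot conclude $\mu\in\Sigma$ and the classification statement is not proved.

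This inequality is in fact the one genuinely delicate point of the theorem, and the paper spends a full paragraph on it: it applies the automorphism $\omega:t^{l}\partial\mapsto -t^{-l}\partial$, notes that $\deg_2(\omega(X))=\deg_2(\omega(Y))=-n$ so that $\deg_2(\omega(X-Y))>-n$, uses the eigen-relation $[X-Y,Y]=cY$ to force $\deg_2(\omega(X-Y))=0$, and then factors $\omega(X-Y)=-t^{-n}(1-a_1t)\cdots(1-a_kt)H(t)\partial$ with $H(t)=(1-a_{k+1}t)\cdots(1-a_nt)-(1-a_1t)^{r_1}\cdots(1-a_kt)^{r_k}$; the condition $\deg_2=0$ forces $t^{n}\mid H(t)$, hence $r_1+\cdots+r_k\geqslant n$, which is exactly $|\mathbf{r}|\geqslant k$. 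Some argument of this kind (or another substitute for it) is indispensable, because $|\mathbf{r}|\geqslant k$ is not a formal consequence of $r_j\geqslant 1$ and $s\leqslant -1$; it reflects the nontrivial fact that $V(\mathbf{r})^{\times}$ is empty for the excluded $\mathbf{r}$. You would need to add this step to complete your proof; the remainder of your proposal then goes through as written.
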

\begin{proof}
If $\frak{a}\subseteq\frak{d}_0\oplus\frak{d}_+$ or $\frak{a}\subseteq\frak{d}_0\oplus\frak{d}_-$, then $\frak{a}$ is equal to $\frak{z}(m)$ for some $m\in\Bbb{Z}^{\times}$ (see Lemma~\ref{lem:normalsubalg}). Now, we assume $\frak{a}\not\subseteq\frak{d}_0\oplus\frak{d}_{\pm}$.

Since $\frak{a}$ is a two-dimensional subalgebra of $\frak{d}$, there is a basis $\{X,Y\}$ of $\frak{s}$ such that
$$[X,Y]=c Y,$$
for some non-zero $c\in\Bbb{C}$.

Note that $\{X-\alpha Y,Y\}$ is also a basis of $\frak{a}$ satisfying $[X-\alpha Y,Y]=c Y$. With a suitable choice of $\alpha$, we may assume $\deg_2(X)\neq\deg_2(Y)$. In this situation, $$\deg_2([X,Y])=\deg_2(X)+\deg_2(Y)=\deg_2(Y),$$
which implies that $\deg_2(X)=0$, i.e.,
$$X=F(t)\partial,$$
where $F(t)\in\Bbb{C}[t]$ satisfying $F(0)\neq0$.

We claim that $\deg_1(X)=\deg_1(Y)>0$. We first observe that $\deg_1(X)\geqslant\deg_2(X)=0$. Since $\deg_1(X)=0$ implies that $X=\alpha \partial$, which yields that $\frak{a}=\frak{z}(m)$ for some $m\in\Bbb{Z}^{\times}$, contradicting the assumption that $\frak{a}\not\subseteq\frak{d}_0\oplus\frak{d}_{\pm}$. Hence, $\deg_1(X)>0$. To prove $\deg_1(X)=\deg_1(Y)$, we suppose contrarily that $\deg_1(X)\neq\deg_1(Y)$. Then
$$\deg_1(Y)=\deg_1([X,Y])=\deg_1(X)+\deg_1(Y).$$
Hence, $\deg_1(X)=0$, i.e., $X=\alpha \partial$ for some $\alpha\in\Bbb{C}^{\times}$, which contradicts the assumption that $\frak{a}\not\subseteq\frak{d}_0\oplus\frak{d}_{\pm}$ again. Therefore, the claim follows.

Now, we write
$$Y=t^{s}G(t)\partial$$
such that $G(t)\in\Bbb{C}[t]$ and $G(0)\neq0$. Then
$$[F(t)\partial,t^{s}G(t)\partial]=c\, t^{s}G(t)\partial.$$
By Lemma~\ref{lem:eigenpoly}, we know that $F(t)$ has no multiple root, every root of $G(t)$ is a root of $F(t)$, and $G(t)$ has no simple root. Without losing of generality, we also assume that both $F(t)$ and $G(t)$ are monic. Hence, we write
\begin{align*}
F(t)&=(t-a_1)\cdots(t-a_n),\\
G(t)&=(t-a_1)^{r_1+1}\cdots(t-a_k)^{r_k+1},
\end{align*}
where $n\geqslant1$, $a_1,\ldots,a_n\in\Bbb{C}^{\times}$, and $r_1,\ldots,r_k\in\Bbb{N}$.

Let $\mathbf{r}=(r_1,\ldots,r_k,-1,\ldots,-1)\in\Bbb{N}^k\times\{-1\}^{n-k}$. We deduce from
$$\deg_1(X)=\deg_1(Y)=n$$
that $s=-|\mathbf{r}|$.
\medskip

Next, we will show that $|\mathbf{r}|\geqslant k$. Since $\frak{a}\not\subseteq\frak{d}_0\oplus\frak{d}_{\pm}$ and $X\in\frak{d}_0\oplus\frak{d}_+$, we know that $Y\not\in\frak{d}_0\oplus\frak{d}_+$. Hence, $\deg_2(Y)=-|\mathbf{r}|\leqslant-1$, i.e., $|\mathbf{r}|\geqslant1$. Considering the automorphism of $\frak{d}:$
$$\omega:\frak{d}\rightarrow\frak{d}, \quad t^{l}\partial\mapsto -t^{-l}\partial,$$
we deduce that
\begin{align*}
\omega(X)&=-F(t^{-1})\partial=-t^{-n}(1-a_1t)\cdots(1-a_nt)\partial,\\
\omega(Y)&=-t^{|\mathbf{r}|}G(t^{-1})\partial=-t^{-n}(1-a_1t)^{r_1+1}\cdots(1-a_kt)^{r_k+1}\partial.
\end{align*}
Hence, $\deg_2(\omega(X))=\deg_2(\omega(Y))=-n$, and $\deg_2(\omega(X-Y))>-n$. We further deduce that
$$\deg_2(\omega(Y))=\deg_2([\omega(X-Y),\omega(Y)])=\deg_2(\omega(X-Y))+\deg_2(\omega(Y)).$$
Hence, $\deg_2(\omega(X-Y))=0$. Now,
$$\omega(X-Y)=-t^{-n}(1-a_1t)\cdots(1-a_kt)H(t)\partial,$$
where $H(t)=(1-a_{k+1}t)\cdots(1-a_nt)-(1-a_1t)^{r_1}\cdots(1-a_kt)^{r_k}$. Then $|\mathbf{r}|\geqslant1$ implies that $H(t)$ is a polynomial of degree $r_1+\cdots+r_k$. On the other hand, $\deg_2(\omega(X-Y))=0$ implies that $t^n$ divides $H(t)$, which yields that
$$r_1+\cdots+r_k\geqslant n,$$
i.e., $|\mathbf{r}|=r_1+\cdots+r_k-(n-k)\geqslant k$.
\medskip

Finally, let $\mathbf{a}=(a_1,\ldots,a_n)\in(\Bbb{C}^{\times})^n$. We will show that $\mathbf{a}\in V(\mathbf{r})$. From $[X,Y]=c Y$, we deduce that
\begin{align*}
[X,Y]&=[F(t)\partial,t^{-|\mathbf{r}|}G(t)\partial]\\
&=t^{-|\mathbf{r}|}G(t)\left(-|\mathbf{r}|\prod_{j=1}^n(t-a_j)+\sum_{l=1}^nr_lt\prod_{j:j\neq l}(t-a_j)\right)\partial\\
&=c\, t^{-|\mathbf{r}|}G(t)\partial.
\end{align*}
It follows that
$$C(t):=-|\mathbf{r}|\prod_{j=1}^n(t-a_j)+\sum_{l=1}^nr_lt\prod_{j:j\neq l}(t-a_j)=c$$
is a constant number. Hence,
$$C(a_i)=0+r_ia_i\prod_{j:j\neq i}(a_i-a_j)=c$$
for $i=1,\ldots,n$, and
$$C(0)=(-1)^{n+1}|\mathbf{r}|a_1\cdots a_n=c.$$
It follows that
$$r_i\prod_{j:j\neq i}(a_j-a_i)=|\mathbf{r}|\prod_{j:j\neq i} a_j,$$
for $i=1,\ldots,n$. Since $a_i\neq0$ for $i=1,\ldots, n$, by Lemma~\ref{lem:propertya}, we conclude that $\mathbf{a}\in V(\mathbf{r})^{\times}$. This completes the proof.
\end{proof}

\section{Finite dimensional subalgebras of $\hat{\frak{d}}$}
\label{sec:subvir}

Using the results obtained in the previous sections, we now completely describe all finite dimensional subalgebras of the Virasoro algebra $\hat{\frak{d}}=\frak{d}\oplus\Bbb{C}K$.

\begin{theorem}
Let $\frak{a}$ be a finite dimensional subalgebra of $\hat{\frak{d}}$. Then $\mathrm{dim}(\frak{a})\leqslant4$. Moreover,
\begin{enumerate}
\item If $\dim(\frak{a})=1$, then $\frak{a}=\Bbb{C}X$ for a nonzero $X\in\hat{\frak{d}}$.
\item If $\dim(\frak{a})=2$, then $\frak{a}$ is equal to one of the following subalgebras:
\begin{itemize}
\item $\Bbb{C}X\oplus\Bbb{C}K$ for some nonzero $X\in\frak{d}$, or
\item $\mathrm{span}_{\Bbb{C}}\{L_0+\alpha K, L_m\}$ for some $\alpha\in\Bbb{C}$ and $m\in\Bbb{Z}^{\times}$, or
\item $\mathrm{span}_{\Bbb{C}}\{P_{\mu}\partial+\alpha K, Q_{\mu}\partial+\beta_0 K\}$ for some $\mu\in\Sigma$ and $\alpha\in\Bbb{C}$, where $\beta_0$ is determined by
    \begin{equation}
    [P_{\mu}(t)\partial, Q(\mu)\partial]=\lambda Q_{\mu}(t)\partial+\lambda\beta_0 K\in\hat{\frak{d}}.\label{eq:2dsubcenter}
    \end{equation}
\end{itemize}
\item If $\dim(\frak{a})=3$, then
\begin{itemize}
\item $\frak{a}=\mathrm{span}_{\Bbb{C}}\{L_0+\frac{1}{24}(m^2-1)K, L_{-m}, L_m\}$ for some $m\in\Bbb{Z}^{\times}$, or
\item $\frak{a}=\frak{z}(m)\oplus\Bbb{C}K$ for some $m\in\Bbb{Z}^{\times}$, or
\item $\frak{a}=\frak{s}(\mu)\oplus\Bbb{C}K$ for some $\mu\in\Sigma$.
\end{itemize}
\item If $\dim(\frak{a})=4$, then $\frak{a}=\mathrm{span}_{\Bbb{C}}\{L_0,L_{-m},L_m,K\}$ for some $m\in\Bbb{Z}^{\times}$.
\end{enumerate}
\end{theorem}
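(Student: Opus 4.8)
The plan is to leverage the classification of two-dimensional subalgebras of $\frak{d}$ (Theorem~\ref{thm:2dsubalgofd}), the known facts about three- and higher-dimensional subalgebras of $\frak{d}$ recalled in the introduction, and a careful analysis of how the central element $K$ can be adjoined. First I would set up the basic bookkeeping: let $\pi\colon\hat{\frak{d}}\to\frak{d}$ be the projection along $\Bbb{C}K$, and for a finite dimensional subalgebra $\frak{a}\subseteq\hat{\frak{d}}$ consider its image $\bar{\frak{a}}=\pi(\frak{a})$, which is a finite dimensional subalgebra of $\frak{d}$, and the intersection $\frak{a}\cap\Bbb{C}K$, which is either $0$ or $\Bbb{C}K$. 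Since $K$ is central and the $2$-cocycle is nontrivial, $[\frak{a},\frak{a}]$ will generally meet $\Bbb{C}K$, so the key dichotomy is whether $K\in\frak{a}$ or not. When $K\in\frak{a}$, we have $\frak{a}=\bar{\frak{a}}\oplus\Bbb{C}K$ as vector spaces and $\dim\frak{a}=\dim\bar{\frak{a}}+1$; since $\dim\bar{\frak{a}}\le 3$ by \cite[Proposition~3.1]{Ng2000witt}, this gives $\dim\frak{a}\le 4$, and the classification of $\bar{\frak{a}}$ (dimensions $1,2,3$, via Theorem~\ref{thm:2dsubalgofd} and \cite{Ng2000witt,suzhao2002virasoro}) immediately yields the "$\oplus\Bbb{C}K$" cases in items (i)--(iv), including the $4$-dimensional case $\mathrm{span}_{\Bbb{C}}\{L_0,L_{-m},L_m,K\}$.

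The substantive case is $K\notin\frak{a}$, so that $\pi|_{\frak{a}}$ is injective and $\dim\frak{a}=\dim\bar{\frak{a}}\le 3$. Here I would argue that $\frak{a}$ is the graph of a linear map $\phi\colon\bar{\frak{a}}\to\Bbb{C}K$, i.e. $\frak{a}=\{\,\bar X+\phi(\bar X)\mid \bar X\in\bar{\frak{a}}\,\}$, and the condition that $\frak{a}$ be a subalgebra is exactly that $\phi$ "trivializes" the restricted cocycle: for all $\bar X,\bar Y\in\bar{\frak{a}}$ one needs $\omega(\bar X,\bar Y)=\phi([\bar X,\bar Y])$, where $\omega(L_m,L_n)=\frac1{12}(m^3-m)\delta_{m,-n}$. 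Thus the $K\notin\frak{a}$ lifts of a given $\bar{\frak{a}}$ are a torsor (possibly empty) over $\mathrm{Hom}(\bar{\frak{a}}/[\bar{\frak{a}},\bar{\frak{a}}],\Bbb{C})$, and existence of any lift is the vanishing of $[\omega|_{\bar{\frak{a}}}]\in H^2(\bar{\frak{a}};\Bbb{C})$. I would then run through the three possibilities for $\bar{\frak{a}}$ supplied by our earlier results:
\begin{itemize}
\item $\dim\bar{\frak{a}}=1$, $\bar{\frak{a}}=\Bbb{C}\bar X$: $\omega(\bar X,\bar X)=0$, so every $\phi$ works, giving $\frak{a}=\Bbb{C}X$ for a nonzero $X\in\hat{\frak{d}}$ with $\pi(X)\neq 0$; together with the $K\in\frak{a}$ subcase $\frak{a}=\Bbb{C}K$ this is item~(i) (noting $\Bbb{C}K$ is the case $\pi(X)=0$).
\item $\dim\bar{\frak{a}}=2$: by Theorem~\ref{thm:2dsubalgofd}, $\bar{\frak{a}}=\frak{z}(m)$ or $\bar{\frak{a}}=\frak{s}(\mu)$. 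In the $\frak{z}(m)=\mathrm{span}\{L_0,L_m\}$ case, $[\bar{\frak{a}},\bar{\frak{a}}]=\Bbb{C}L_m$ and $\omega(L_0,L_m)=0$, so any $\phi$ trivializes it and the lifts are $\mathrm{span}_{\Bbb{C}}\{L_0+\alpha K,\,L_m+\gamma K\}$; but subtracting $\gamma/m\cdot L_m$ from the second vector removes the $\gamma K$ on $L_m$, so up to change of basis we get exactly $\mathrm{span}_{\Bbb{C}}\{L_0+\alpha K, L_m\}$, matching item~(ii). In the $\frak{s}(\mu)$ case, $[\bar{\frak{a}},\bar{\frak{a}}]=\Bbb{C}\,Q_\mu(t)\partial$ and relation \eqref{eq:comin2d} forces the $K$-component on $Q_\mu(t)\partial$: writing $\frak{a}=\mathrm{span}\{P_\mu(t)\partial+\alpha K,\,Q_\mu(t)\partial+\beta K\}$, the bracket must equal $c_\mu(Q_\mu(t)\partial+\beta K)$, and expanding the cocycle term of $[P_\mu(t)\partial,Q_\mu(t)\partial]$ in the $L_m$-basis pins down $\beta=\beta_0$ as in \eqref{eq:2dsubcenter}; $\alpha$ remains free (it is the $\mathrm{Hom}(\bar{\frak{a}}/[\bar{\frak{a}},\bar{\frak{a}}],\Bbb{C})$ ambiguity), giving item~(ii).
\item $\dim\bar{\frak{a}}=3$: by \cite[Proposition~3.4]{Ng2000witt} (or \cite[Lemma~3.1]{suzhao2002virasoro}), $\bar{\frak{a}}=\mathrm{span}\{L_{-m},L_0,L_m\}$, which is $\cong\mathfrak{sl}_2$, hence perfect, so $\mathrm{Hom}(\bar{\frak{a}}/[\bar{\frak{a}},\bar{\frak{a}}],\Bbb{C})=0$ and there is at most one lift; I would then check that the unique candidate $\phi$ is $\phi(L_0)=\frac1{24}(m^2-1)K$, $\phi(L_{\pm m})=0$, by imposing $\phi([L_0,L_m])=\omega(L_0,L_m)=0$ (automatic), $\phi([L_0,L_{-m}])=0$ (automatic), and the one nontrivial relation $\phi([L_m,L_{-m}])=\omega(L_m,L_{-m})$, i.e. $2m\,\phi(L_0)=\frac1{12}(m^3-m)K$, which is solvable and gives precisely $\mathrm{span}_{\Bbb{C}}\{L_0+\frac1{24}(m^2-1)K,L_{-m},L_m\}$ as in item~(iii). (One also confirms consistency with all Jacobi-type relations, which holds because $\omega$ is a genuine cocycle.)
\end{itemize}

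Assembling the two cases ($K\in\frak{a}$ and $K\notin\frak{a}$) over all dimensions $\le 3$ of $\bar{\frak{a}}$ gives the complete list in items (i)--(iv) and the bound $\dim\frak{a}\le 4$. The main obstacle I anticipate is not any single hard step but the care needed in the $\dim\bar{\frak{a}}=2$, $\bar{\frak{a}}=\frak{s}(\mu)$ case: one must actually compute the $K$-component of $[P_\mu(t)\partial,Q_\mu(t)\partial]$ in $\hat{\frak{d}}$ — i.e. expand $P_\mu$ and $Q_\mu$ in the $L_n$-basis and evaluate $\sum_{m}\frac1{12}(m^3-m)(\text{coeff of }L_m\text{ in }P_\mu)(\text{coeff of }L_{-m}\text{ in }Q_\mu)$ — to verify that a constant $\beta_0$ exists making \eqref{eq:2dsubcenter} consistent, and that the $\alpha$-ambiguity genuinely survives (i.e. that adding $\alpha K$ to $P_\mu(t)\partial$ never disturbs the bracket, which is clear since $K$ is central). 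A secondary subtlety is making sure the normalizations/change-of-basis moves used to put each lift into the stated canonical form do not accidentally collapse distinct subalgebras, but this follows from the uniqueness statements in Proposition~\ref{prop:noneq} together with the observation that the $K$-data ($\alpha$, and the forced $\beta_0$) are basis-independent invariants of $\frak{a}$.
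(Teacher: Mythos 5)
Your overall strategy is the same as the paper's: project via $\pi\colon\hat{\frak{d}}\rightarrow\frak{d}$, use $\dim\pi(\frak{a})\leqslant3$ to get $\dim\frak{a}\leqslant4$, invoke Theorem~\ref{thm:2dsubalgofd} together with the known description of three-dimensional subalgebras of $\frak{d}$, and then determine the admissible $K$-components case by case. The paper carries out this last step by direct bracket computations (and quotes \cite[Corollary~3.5]{Ng2000witt} for the four-dimensional case, which you instead rederive from the $K\in\frak{a}$, $\dim\pi(\frak{a})=3$ branch); your cocycle-trivialization formulation of the lifting problem is just a more systematic packaging of the same computations, and your treatments of the $\frak{s}(\mu)$ and $\mathfrak{sl}_2$ cases are correct. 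Note also that no consistency check on $\beta_0$ is needed: \eqref{eq:2dsubcenter} simply defines $\beta_0$ as $1/c_{\mu}$ times the $K$-component of $[P_{\mu}(t)\partial,Q_{\mu}(t)\partial]$ computed in $\hat{\frak{d}}$, which exists because $c_{\mu}\neq0$ by \eqref{eq:comin2d}.

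There is, however, one step that is wrong as written, in the $\frak{z}(m)$, $K\notin\frak{a}$ subcase. It is not true that ``any $\phi$ trivializes'' the restricted cocycle: your own criterion $\phi([\bar X,\bar Y])=\omega(\bar X,\bar Y)$, applied to $[L_0,L_m]=-mL_m$ and $\omega(L_0,L_m)=0$, forces $\phi(L_m)=0$. Accordingly, for $\gamma\neq0$ the space $\mathrm{span}_{\Bbb{C}}\{L_0+\alpha K,\,L_m+\gamma K\}$ is not a subalgebra at all (its bracket $-mL_m$ does not lie in it), and the proposed repair --- subtracting $\frac{\gamma}{m}L_m$ from the second basis vector --- cannot remove a $K$-component; indeed $\mathrm{span}_{\Bbb{C}}\{L_0+\alpha K,L_m+\gamma K\}$ equals $\mathrm{span}_{\Bbb{C}}\{L_0+\alpha K,L_m\}$ only when $\gamma=0$. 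The conclusion you state is nonetheless correct, and the fix is immediate and is exactly the paper's argument: writing $\frak{a}=\mathrm{span}_{\Bbb{C}}\{L_0+\alpha K,L_m+\beta K\}$, the requirement $[L_0+\alpha K,L_m+\beta K]=-mL_m\in\frak{a}$ forces $\beta=0$. With that local correction your proof goes through and matches the paper's.
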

\begin{proof}
We consider the canonical homomorphism
$$\pi:\hat{\frak{d}}\rightarrow\frak{d},$$
which maps $X$ to $X$ if $X\in\frak{d}$, and maps $K$ to $0$. Then $\pi(\frak{a})$ is a finite-dimensional subalgebra of $\frak{d}$. Hence, $\dim(\pi(\frak{a}))\leqslant3$. It follows that $\dim(\frak{a})\leqslant4$.

(i) is obvious.
\medskip

(ii) Since $\dim(\frak{a})=2$, $\dim(\pi(\frak{a}))=1$ or $2$. If $\dim(\pi(\frak{a}))=1$, then $\pi(\frak{a})=\Bbb{C}X$ for some nonzero $X\in\frak{d}$. Hence, $\frak{a}=\Bbb{C}X\oplus\Bbb{C}K$. Now we assume $\dim(\pi(\frak{a}))=2$. By Theorem~\ref{thm:2dsubalgofd}, the subalgebra $\pi(\frak{a})=\frak{z}(m)$ for some $m\in\Bbb{Z}^{\times}$ or $\pi(\frak{a})=\frak{s}(\mu)$ for some $\mu\in\Sigma$.

If $\pi(\frak{a})=\frak{z}(m)$, there are $\alpha,\beta\in\Bbb{C}$ such that $\frak{a}=\mathrm{span}_{\Bbb{C}}\{L_0+\alpha K, L_m+\beta K\}$. From
$$[L_0+\alpha K, L_m+\beta K]=-mL_m\in\frak{a},$$
we deduce that $\beta=0$. Hence, $\frak{a}=\mathrm{span}_{\Bbb{C}}\{L_0+\alpha K, L_m\}$ for some $m\in\Bbb{Z}^{\times}$ and $\alpha\in\Bbb{C}$.

If $\pi(\frak{a})=\frak{s}(\mu)$, there are $\alpha,\beta\in\Bbb{C}$ such that
$$\frak{a}=\mathrm{span}_{\Bbb{C}}\{P_{\mu}(t)\partial+\alpha K, Q_{\mu}(t)\partial+\beta K\}.$$
From (\ref{eq:2dsubcenter}), we deduce that $\beta=\beta_0$. Hence,
$$\frak{a}=\mathrm{span}_{\Bbb{C}}\{P_{\mu}(t)\partial+\alpha K,Q_{\mu}(t)\partial+\beta_0 K\}$$
for some $\mu\in\Sigma$ and $\alpha\in\Bbb{C}$.
\medskip

(iii) Since $\dim(\frak{a})=3$, $\dim(\pi(\frak{a}))=2$ or $3$. If $\dim(\pi(\frak{a}))=2$, by Theorem~\ref{thm:2dsubalgofd}, $\pi(\frak{a})=\frak{z}(m)$ for some $m\in\Bbb{Z}^{\times}$ or $\pi(\frak{a})=\frak{s}(\mu)$ for some $\mu\in\Sigma$. Hence, $\frak{a}$ is $\frak{z}(m)\oplus\Bbb{C}K$ or $\frak{s}(\mu)\oplus\Bbb{C}K$. Now, we assume $\dim(\pi(\frak{a}))=3$. Then $\pi(\frak{a})=\mathrm{span}_{\Bbb{C}}\{L_{-m},L_0,L_m\}$ for some $m\in\Bbb{Z}^{\times}$. It follows that
$$\frak{a}=\mathrm{span}_{\Bbb{C}}\{L_{-m}+\alpha K,L_0+\beta K,L_m+\gamma K\}$$
for some $\alpha,\beta,\gamma\in\Bbb{C}$. Note that $\frak{a}$ is a three dimensional subalgebra of $\frak{d}$, we further deduce that $\alpha=\gamma=0$ and $\beta=\frac{1}{24}(m^2-1)$. Hence,
$$\frak{a}=\mathrm{span}_{\Bbb{C}}\{L_{-m},L_0+\textstyle{\frac{1}{24}}(m^2-1) K,L_m\}.$$
\medskip

(iv) has been proved in \cite[Corollary~3.5]{Ng2000witt}.
\end{proof}

\section{Further discussion on the algebraic set $V(\mathbf{r})$}
\label{sec:variety}

To create a two-dimensional subalgebra $\frak{s}(\mu)$, it suffices to give a quadruple $(n,k,\mathbf{r},\mathbf{a})$, where $n,k\in\Bbb{N}$ with $n\geqslant k$, $\mathbf{r}\in\Gamma(n,k)$, and $\mathbf{a}\in V(\mathbf{r})^{\times}$. It is easy to observe that $V(\mathbf{r})$ is an algebraic set solely depending on $\mathbf{r}$. However, for an arbitrary $\mathbf{r}\in\Gamma(n,k)$, a concrete parametrization for all points of $V(\mathbf{r})$ is not known. Nonetheless, we can describe all points of $V(\mathbf{r})^{\times}$ in a few special cases and estimate the cardinality of the set $V(\mathbf{r})^{\times}$ for an arbitrary $\mathbf{r}\in\Gamma(n,k)$. These will be the main issues discussed in this section.

We first create a few concrete examples.

\begin{example}Let $n=k=1$. Then $\mathbf{r}=r$ could be an arbitrary positive integer, and
$$V(\mathbf{r})=\Bbb{C}^{\times}.$$
We obtain two-dimensional subalgebras of $\frak{d}$:
$$\mathrm{span}_{\Bbb{C}}\{(t-a)\partial, t^{-r}(t-a)^{r+1}\partial\}$$
for $r\in\Bbb{N}$ and $a\in\Bbb{C}^{\times}$.
\end{example}

\begin{example}
Let $n=2$ and $k=1$ or $2$. Then $\mathbf{r}=(r_1,r_2)$ with $r_1,r_2\in\{-1\}\cup\Bbb{N}$ such that $r_1+r_2\geqslant k$. In this situation,
$$V(\mathbf{r})^{\times}=\{(ar_2,-ar_1)|a\in\Bbb{C}^{\times}\}.$$
It yields two-dimensional subalgebras of $\frak{d}$:
$$\mathrm{span}_{\Bbb{C}}\{(t-ar_2)(t+ar_1)\partial,t^{-r_1-r_2}(t-ar_2)^{r_1+1}(t-ar_1)^{r_2+1}\partial\}$$
for $r_1,r_2\in\{-1\}\cup\Bbb{N}$ satisfying $r_1+r_2\geqslant k$, and $a\in\Bbb{C}^{\times}$.
\end{example}

\begin{example}
Let $n=3$, $1\leqslant k\leqslant3$ and $\mathbf{r}=(r_1,r_2,r_3)\in\Gamma(n,k)$ such that $r_1\geqslant r_2\geqslant r_3$.
\begin{itemize}
\item If $(r_2,r_3)\neq(1,-1)$, then
$$V(\mathbf{r})^{\times}=\{a(-r_3\pm\textstyle{\frac{1}{r_1}}\sqrt{-r_1r_2r_3|\mathbf{r}|},-r_3\mp\textstyle{\frac{1}{r_2}}\sqrt{-r_1r_2r_3|\mathbf{r}|},r_1+r_2)|a\in\Bbb{C}^{\times}\},$$
where $|\mathbf{r}|=r_1+r_2+r_3$.
\item If $(r_2,r_3)=(1,-1)$, then
$$V(\mathbf{r})^{\times}=\{a(2,1-r_1,r_1+1)|a\in\Bbb{C}^{\times}\}.$$
\end{itemize}
\end{example}

For $n\geqslant4$, we have the following examples:

\begin{example}
Let $n=k$ be an arbitrary positive integer and $\mathbf{r}=(r,\ldots,r)$ for $r\in\Bbb{N}$. Then $\mathbf{a}=(\zeta_n,\zeta_n^2,\ldots,\zeta_n^n)\in V(\mathbf{r})^{\times}$, where $\zeta_n$ is a primitive $n$-th root of unity. The two-dimensional Lie algebra $\frak{s}(\mu)$ for $\mu=(n,n,\mathbf{r},\mathbf{a})$ is
$$\mathrm{span}_{\Bbb{C}}\{(t^n-1)\partial, t^{-rn}(t^n-1)^{r+1}\partial\}.$$
\end{example}

\begin{example}
Let $\mu:=(n,k,\mathbf{r},\mathbf{a})\in\Sigma$. Then
\begin{enumerate}
\item $(n,k,\mathbf{r},c\mathbf{a})\in\Sigma$ for all $c\in\Bbb{C}^{\times}$.
\item If $n=k$, then $\mathbf{r}=(r_1,\ldots,r_n)$ with $r_i\in\Bbb{N}$ for $i=1,\ldots,n$. In this situation, $\mathbf{a}\in V(s\mathbf{r})^{\times}$ for all $s\in\Bbb{N}$. Hence, $(n,n,s\mathbf{r},\mathbf{a})\in\Sigma$.
\end{enumerate}
\end{example}

\begin{example}
For each $s\in\Bbb{N}$,
$$\tau_s:\frak{d}\rightarrow\frak{d},\quad t^{l}\partial\mapsto st^{sl}\partial$$
is an injective homomorphism. Hence, if $$\frak{a}=\mathrm{span}_{\Bbb{C}}\{F(t)\partial,G(t)\partial\}$$ is a two-dimensional subalgebra of $\frak{d}$. Then
$$\mathrm{span}_{\Bbb{C}}\{F(t^s)\partial, G(t^s)\partial\}$$
is also a two-dimensional subalgebra of $\frak{d}$.

In particular, if $\frak{a}=\frak{s}(\mu)$ with $\mu=(n,k,\mathbf{r},\mathbf{a})$, then we obtain the two-dimensional subalgebra $\frak{s}(\mu')$ with $\mu'=(sn,sk,\mathbf{r}',\mathbf{a}')$, where
$$\mathbf{r}'=(\underbrace{r_1,\ldots,r_1}_{s\text{ copies}},\ldots,\underbrace{r_n,\ldots,r_n}_{s\text{ copies}}),\text{ and }\mathbf{a}'=(a_{1,1},\ldots,a_{1,s},\ldots,a_{n,1},\ldots,a_{n,s}),$$
in which $a_{i,1},\ldots,a_{i,s}$ are $s$ distinct roots of $t^s-a_i$ for each $i=1,\ldots,n$.
\end{example}

In general, we observe that the definition equations of $V(\mathbf{r})$ in (\ref{eq:defeqvar}) are homogeneous, and hence define a projective variety $\overline{V}(\mathbf{r})\subseteq\Bbb{P}^{n-1}(\Bbb{C})$. We view $\Bbb{P}^{n-1}(\Bbb{C})$ as $\Bbb{C}^n/\Bbb{C}^{\times}$ and denotes the image of $V(\mathbf{r})^{\times}$ in $\Bbb{P}^{n-1}(\Bbb{C})$ by $\overline{V}(\mathbf{r})^{\times}$.

\begin{lemma}
\label{lem:pvar2}
Let $\mathbf{a}\in V(\mathbf{r})^{\times}$ and $\bar{\mathbf{a}}$ the canonical image of $\mathbf{a}$ in $\overline{V}(\mathbf{r})$. Then $\bar{\mathbf{a}}$ has multiplicity $1$ in $\overline{V}(\mathbf{r})$.
\end{lemma}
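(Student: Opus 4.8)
The plan is to show that the Zariski tangent space to $\overline{V}(\mathbf{r})$ at $\bar{\mathbf{a}}$ is zero-dimensional, which forces $\bar{\mathbf{a}}$ to be an isolated point of multiplicity $1$. Since the defining equations $f_i(\mathbf{x})=\sum_{j=1}^n r_j x_j^{\,i}$ for $i=1,\ldots,n-1$ are homogeneous, it is enough to compute the rank of the Jacobian matrix $J=\bigl(\partial f_i/\partial x_j\bigr)$ evaluated at $\mathbf{a}$ together with the Euler relation, i.e. to show that the $n\times n$ matrix whose rows are the gradients of $f_1,\ldots,f_{n-1}$ at $\mathbf{a}$ together with the vector $\mathbf{a}$ itself (the tangent vector to the $\Bbb{C}^\times$-orbit) has full rank $n$; equivalently, after passing to the projective setting, that the $(n-1)$ gradients span an $(n-1)$-dimensional space transverse to $\Bbb{C}\mathbf{a}$. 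First I would write out $\partial f_i/\partial x_j = i\, r_j a_j^{\,i-1}$, so the relevant $n\times n$ matrix has $(i,j)$-entry $i\,r_j a_j^{\,i-1}$ for $i=1,\ldots,n-1$ and last row $(a_1,\ldots,a_n)$. Pulling the scalar $i$ out of row $i$ and $r_j a_j^{\,?}$-type factors out of columns, one is left with a Vandermonde-type determinant in the distinct nonzero scalars $a_1,\ldots,a_n$ — and here I would invoke Lemma~\ref{lem:propertya}, which guarantees both $a_j\neq 0$ and $a_i\neq a_j$ for $i\neq j$, precisely the hypotheses needed for that Vandermonde determinant to be nonzero.

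More concretely, I expect the computation to reduce as follows. Consider the $n\times n$ matrix $M$ with rows $\nabla f_1(\mathbf{a}),\ldots,\nabla f_{n-1}(\mathbf{a})$ and final row $\mathbf{a}$. Row $i$ (for $i\le n-1$) equals $i\cdot(r_1 a_1^{\,i-1},\ldots,r_n a_n^{\,i-1})$, and the final row is $(r_1 a_1,\ldots,r_n a_n)\cdot$ — no, rather, $a_j$ itself. The cleanest route: factor $r_j$ out of column $j$ from the first $n-1$ rows but not the last; instead I would note that the first $n-1$ rows already have column $j$ proportional to $r_j$, so I should compare against the matrix with rows $(a_1^{\,0},\ldots,a_n^{\,0}), (a_1^{\,1},\ldots,a_n^{\,1}),\ldots,(a_1^{\,n-1},\ldots,a_n^{\,n-1})$. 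Using the identity $\sum_j r_j a_j^{\,i}=\delta_{i,0}|\mathbf{r}|$ for $i=0,\ldots,n-1$ (from Lemma~\ref{lem:propertya}) one sees $\mathbf{a}$ is \emph{not} in the span of the gradient rows: if it were, $\mathbf{a}=\sum_{i=1}^{n-1}\lambda_i\,\nabla f_i(\mathbf{a})$ would give $a_j = \bigl(\sum_i \lambda_i i\, a_j^{\,i-1}\bigr) r_j$ for all $j$, and dividing by $r_j$ (nonzero since $\mathbf a\in V(\mathbf r)^\times$ forces $r_j\neq 0$ — indeed $r_j a_j^{\,i}$ appears in the equations and by Lemma~\ref{lem:propertya} equation (\ref{eq:defeqvar2}) gives $r_j\neq 0$) and using the Vandermonde invertibility of $(a_j^{\,i})$ yields a contradiction with $n-1<n$ equations. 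Thus $M$ has rank $n$, the projective tangent space at $\bar{\mathbf a}$ is $0$-dimensional, and $\bar{\mathbf a}$ is a reduced (multiplicity-one) point of $\overline V(\mathbf r)$.

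The main obstacle I anticipate is purely bookkeeping: cleanly organizing the linear-algebra argument so that the Vandermonde nonvanishing is visibly the only fact used, and correctly identifying that $r_j\neq 0$ for every $j$ when $\mathbf{a}\in V(\mathbf r)^\times$ (this is where equation (\ref{eq:defeqvar2}) of Lemma~\ref{lem:propertya} is essential — the right-hand side $|\mathbf r|\prod_{j\neq i}a_j$ is nonzero, so $r_i\neq 0$). A secondary subtlety is making precise the passage between "the affine cone $V(\mathbf r)$ is smooth of dimension $1$ along the line $\Bbb{C}\mathbf a$" and "$\bar{\mathbf a}$ has multiplicity $1$ in the projective variety $\overline V(\mathbf r)$"; I would handle this by the standard fact that a point of a variety at which the Zariski tangent space has dimension equal to the local dimension is a smooth point, hence of multiplicity $1$, and that for a cone smoothness away from the vertex descends to the projectivization. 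No deep input is needed beyond Lemma~\ref{lem:propertya} and elementary properties of the Vandermonde determinant.
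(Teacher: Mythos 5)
Your overall strategy (Jacobian criterion for smoothness, hence multiplicity one) is the right one and is the paper's, but the key step you propose is wrong. You claim that the bordered $n\times n$ matrix $M$ with rows $\nabla f_1(\mathbf{a}),\ldots,\nabla f_{n-1}(\mathbf{a}),\mathbf{a}$ has rank $n$, i.e.\ that $\mathbf{a}$ does not lie in the span of the gradients. This is false in general: take $\mathbf{r}=(1,1,1)$ and $\mathbf{a}=(1,\omega,\omega^2)$ with $\omega$ a primitive cube root of unity (this is in $V(\mathbf{r})^{\times}$, cf.\ the paper's Example with $n=k$, $\mathbf{r}=(1,\ldots,1)$). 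Here $f_2(\mathbf{x})=x_1^2+x_2^2+x_3^2$, so $\nabla f_2(\mathbf{a})=2\mathbf{a}$ and $M$ is singular; the same happens for every $n\geqslant 3$ with $\mathbf{r}=(1,\ldots,1)$ and $\mathbf{a}$ the $n$-th roots of unity. Moreover, your argument for that step is not a proof even where the claim is true: from $a_j=r_j\,p(a_j)$ with $\deg p\leqslant n-2$ you only get an overdetermined interpolation problem ($n$ conditions, $n-1$ coefficients), and overdetermined does not mean inconsistent. The underlying conceptual slip is the word ``equivalently'': multiplicity one at $\bar{\mathbf{a}}$ requires $\operatorname{rank}J(\mathbf{a})=n-1$, i.e.\ $\ker J(\mathbf{a})=\Bbb{C}\mathbf{a}$, and by Euler's relation $\mathbf{a}$ always lies in $\ker J(\mathbf{a})$; over $\Bbb{C}$ the vector $\mathbf{a}$ can be isotropic (e.g.\ $\sum_j a_j^2=0$ above), so ``$\mathbf{a}$ is orthogonal to the row span'' does not imply ``$\mathbf{a}$ is outside the row span''. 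Demanding $\operatorname{rank}M=n$ is strictly stronger than what is needed and, as shown, fails.

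The repair is exactly what the paper does, and it uses only the tools you already have: show $\operatorname{rank}J(\mathbf{a})=n-1$ directly by exhibiting an invertible $(n-1)\times(n-1)$ submatrix. Since all $a_j\neq0$, one may work in the affine chart $x_n=1$; the dehomogenized Jacobian is (up to nonzero row scalings) the submatrix of $J(\mathbf{a})=(i r_j a_j^{i-1})$ formed by the first $n-1$ columns, whose determinant is
$(n-1)!\bigl(\prod_{j=1}^{n-1}r_j\bigr)\prod_{1\leqslant i<j\leqslant n-1}(a_j-a_i)\neq0$,
using $r_j\neq0$ (immediate from the definition of $\Gamma(n,k)$) and the distinctness of the $a_j$ from Lemma~\ref{lem:propertya}. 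Then $\ker J(\mathbf{a})=\Bbb{C}\mathbf{a}$, the Zariski tangent space of $\overline{V}(\mathbf{r})$ at $\bar{\mathbf{a}}$ is zero-dimensional, and $\bar{\mathbf{a}}$ is a reduced isolated point, i.e.\ has multiplicity one; your closing remarks on passing from the cone to the projectivization are fine once this rank computation replaces the rank-$n$ claim.
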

\begin{proof}
Note that the Jacobian matrix of the defining equations in (\ref{eq:defeqvar}) is
$$J(x)=(ir_jx_j^{i-1})_{1\leqslant i\leqslant n-1\atop 1\leqslant j\leqslant n}$$
Evaluating at $\mathbf{a}\in V(\mathbf{r})^{\times}$, its sub-matrix consisting of the first $n-1$ columns has the determinant
$$(n-1)!\left(\prod_{j=1}^{n-1}r_j\right)\cdot\left(\prod_{1\leqslant i\neq j\leqslant n-1}(a_i-a_j)\right)$$
which is nonzero by Lemma~\ref{lem:propertya}, i.e., the Jacobian matrix has rank $n-1$ at $\mathbf{a}$. Hence, $\bar{\mathbf{a}}$ has multiplicity one.
\end{proof}

\begin{proposition}
\label{prop:finite}
The set $\overline{V}(\mathbf{r})^{\times}$ contains at most $(n-1)!$ elements.
\end{proposition}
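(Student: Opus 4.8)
The plan is to bound the number of points of $\overline{V}(\mathbf{r})^{\times}$ by exhibiting $V(\mathbf{r})^{\times}$ as (a subset of) the fibre of a suitable finite map, using the reformulation of membership in $V(\mathbf{r})$ provided by Lemma~\ref{lem:propertya}. The key observation is that equation~(\ref{eq:defeqvar2}) says precisely that, for $\mathbf{a}\in V(\mathbf{r})^{\times}$, the polynomial
$$
C(t):=-|\mathbf{r}|\prod_{j=1}^n(t-a_j)+\sum_{l=1}^n r_l\,t\prod_{j:j\neq l}(t-a_j)
$$
vanishes at each $a_i$; since $\deg C\leqslant n$ and $C$ already has the $n$ roots $a_1,\dots,a_n$ (which are distinct by Lemma~\ref{lem:propertya}), and since its leading coefficient is $-|\mathbf{r}|+\sum_l r_l = -(n-k)$ wait—this should be checked—$C(t)$ is either identically the constant $c_\mu=(-1)^{n+1}|\mathbf{r}|a_1\cdots a_n$ or a constant multiple of $\prod_j(t-a_j)$. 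The right way to package this: $P_\mu(t)=\prod_j(t-a_j)$ satisfies a first-order ODE $t(P P_1' - P_1 P') = c\,P_1$ coming from $[P_\mu\partial,Q_\mu\partial]=c_\mu Q_\mu\partial$, and what I actually want is to read off from the coefficients of $P_\mu$ a system of equations determining the elementary symmetric functions of $a_1,\dots,a_n$.

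Concretely, the cleanest route is: work projectively, so normalize a point $\bar{\mathbf a}\in\overline V(\mathbf r)^\times$ by scaling so that, say, $a_1\cdots a_n$ takes a fixed value (this uses $\mathbf a\in(\Bbb C^\times)^n$, so the product is nonzero, and Example~5.6(i) shows scaling preserves $V(\mathbf r)^\times$; each projective point has a finite—indeed $n$-fold—set of such normalizations, but to count projective points it suffices to count normalized affine ones up to that ambiguity, or simply to count affine points in the scaled slice and divide, or—better—to bound directly). Then Lemma~\ref{lem:propertya} gives, for each $i$, the relation $r_i\prod_{j\neq i}(a_j-a_i)=|\mathbf r|\prod_{j\neq i}a_j$. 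Multiplying the $i$-th relation by $a_i$ and summing, or rather comparing coefficients, one sees these $n$ relations together with the homogeneity exhibit $\mathbf a$ as a solution of a zero-dimensional system; the multiplicity-one statement of Lemma~\ref{lem:pvar2} then says each such $\bar{\mathbf a}$ is a reduced point of the projective variety $\overline V(\mathbf r)$.

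So the heart of the argument is a Bézout/degree count. The defining equations $\sum_j r_j x_j^i=0$, $i=1,\dots,n-1$, cut out $\overline V(\mathbf r)$ in $\Bbb P^{n-1}$; the $i$-th equation has degree $i$. If $\overline V(\mathbf r)^\times$ were zero-dimensional I could invoke Bézout to get $\prod_{i=1}^{n-1} i = (n-1)!$ as the total degree, and since by Lemma~\ref{lem:pvar2} every point of $\overline V(\mathbf r)^\times$ is a reduced (multiplicity-one) isolated point of the scheme cut out by these equations, the number of such points is at most the sum of local multiplicities over \emph{all} isolated components, which is at most $(n-1)!$. The technical point to be careful about is that $\overline V(\mathbf r)$ as a whole need not be zero-dimensional or a complete intersection; what saves the count is that $\overline V(\mathbf r)^\times$ consists of isolated reduced points (Lemma~\ref{lem:pvar2} gives full-rank Jacobian there, hence these points are isolated in $\overline V(\mathbf r)$ and appear with multiplicity one in the intersection cycle), so one applies the refined Bézout inequality: the sum of the intersection multiplicities of the isolated components of $V(f_1)\cap\dots\cap V(f_{n-1})$ is at most $\deg f_1\cdots\deg f_{n-1}=(n-1)!$.

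The main obstacle I anticipate is exactly this last subtlety—justifying the degree bound when the variety is not a priori a complete intersection. The safe way around it, which I would adopt if a black-box "refined Bézout" feels too heavy, is elementary elimination: use Lemma~\ref{lem:propertya} to show that the normalized coordinates $a_i$ are the roots of a single polynomial of degree $n$ whose coefficients are universal polynomials in $r_1,\dots,r_n,|\mathbf r|$ (obtained by forcing $C(t)\equiv c_\mu$, i.e. matching $n$ coefficient relations), so that the \emph{unordered} tuple $\{a_1,\dots,a_n\}$ is uniquely determined up to the overall scaling once one chooses the scaling normalization, and then count how many genuinely distinct scalings/relabelings are compatible—this should give the $(n-1)!$ directly (roughly: $n!$ orderings of roots of a degree-$n$ polynomial, quotiented by the $n$-fold scaling ambiguity, yields $(n-1)!$ projective points), while Lemma~\ref{lem:pvar2}'s multiplicity-one statement guarantees no collisions inflate or deflate the count. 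I would write the proof along whichever of these two lines is shortest given the machinery the paper is willing to cite; the Bézout phrasing is slicker, the elimination phrasing is self-contained.
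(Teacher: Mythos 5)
Your main (B\'ezout) route is essentially the paper's own proof: Lemma~\ref{lem:pvar2} makes each point of $\overline{V}(\mathbf{r})^{\times}$ a reduced, isolated component of the intersection of the hypersurfaces $\sum_j r_jx_j^i=0$ of degrees $1,\dots,n-1$, and the refined B\'ezout inequality (Fulton, \S 12.3) bounds the number of such components by $1\cdot2\cdots(n-1)=(n-1)!$. The exploratory detours (the leading-coefficient aside, the elimination sketch) are unnecessary, but the argument you actually commit to is correct and coincides with the paper's.
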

\begin{proof}
It follows from Lemma~\ref{lem:pvar2} that every element $\bar{\mathbf{a}}\in\overline{V}(\mathbf{r})^{\times}$ has multiplicity one in the projective algebraic set $\overline{V}(\mathbf{r})$. It follows that every element $\bar{\mathbf{a}}$ forms an irreducible component of $\overline{V}(\mathbf{r})$. Hence, the number of elements in $\overline{V}(\mathbf{r})^{\times}$ does not exceed the number of irreducible components of $\overline{V}(\mathbf{r})$. By the refined B\'{e}zout's theorem, this number is less than or equal to $(n-1)!$ (c.f. \cite[12.3]{fulton1998}).
\end{proof}

\begin{remark}
Given $n,k\in\Bbb{N}$ with $n\geqslant k$ and $\mathbf{r}\in\Gamma(n,k)$, we know from Proposition~\ref{prop:finite} that $\overline{V}(\mathbf{r})^{\times}$ is finite, but $\overline{V}(\mathbf{r})$ is not necessarily finite. For example, if  $\mathbf{r}=(4,1,1,-1,-1)\in\Gamma(5,3)$, then $(0,1,a,1,a)$ with $a\in\Bbb{C}$ represent infinitely many elements in $\overline{V}(\mathbf{r})$.
\end{remark}

\begin{proposition}
\label{prop:mul1case}
Let $n,k\in\Bbb{N}$ with $n\geqslant k$ and $\mathbf{r}=(r_1,\ldots,r_k,-1,\ldots,-1)\in\Gamma(n,k)$. If
\begin{equation}
r_i\geqslant n-k+1\label{eq:strongconditionr}
\end{equation}
for all $i=1,\ldots,k$. Then $\overline{V}(\mathbf{r})^{\times}$ has exactly $(n-1)!$ elements.
\end{proposition}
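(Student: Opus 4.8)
The plan is to combine two ingredients. First, I will show that hypothesis (\ref{eq:strongconditionr}) is exactly strong enough to force every nonzero point of $V(\mathbf{r})$ to have all coordinates nonzero, so that $V(\mathbf{r})\setminus\{0\}=V(\mathbf{r})^{\times}$ and hence $\overline{V}(\mathbf{r})=\overline{V}(\mathbf{r})^{\times}$. Second, using the transversality already recorded in Lemma~\ref{lem:pvar2} together with B\'ezout's theorem, I will show that $\overline{V}(\mathbf{r})$ consists of exactly $(n-1)!$ points. (Proposition~\ref{prop:finite} already gives the upper bound; the B\'ezout computation below recovers it and supplies the matching lower bound.)

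\textbf{Step 1: no coordinate vanishes.} Suppose for contradiction that $\mathbf{a}=(a_1,\dots,a_n)\in V(\mathbf{r})$ is nonzero but $a_j=0$ for $j$ in some nonempty proper subset $Z\subsetneq\{1,\dots,n\}$. Let $v_1,\dots,v_s$ be the distinct nonzero values taken by the coordinates $a_j$ with $j\notin Z$, and put $w_p:=\sum_{j\notin Z,\,a_j=v_p}r_j$. Since $a_j^i=0$ for $j\in Z$ and $i\geqslant1$, the defining equations (\ref{eq:defeqvar}) reduce to $\sum_{p=1}^{s}w_pv_p^{i}=0$ for $i=1,\dots,n-1$. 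As $Z\neq\emptyset$ we have $s\leqslant n-1$, so the equations with $i=1,\dots,s$ form a square linear system in $w_1,\dots,w_s$ whose matrix has determinant $\bigl(\prod_p v_p\bigr)\prod_{p<q}(v_q-v_p)\neq0$; hence $w_p=0$ for every $p$. Fix $p$ and let $\alpha_p$ (resp.\ $\beta_p$) be the number of indices $j\leqslant k$ (resp.\ $j>k$) with $a_j=v_p$. Then $w_p=0$ reads $\sum_{j\leqslant k,\,a_j=v_p}r_j=\beta_p$, and $\beta_p\leqslant n-k$ since there are only $n-k$ indices exceeding $k$. By (\ref{eq:strongconditionr}) each $r_j$ with $j\leqslant k$ is at least $n-k+1$, so $\alpha_p\geqslant1$ would force $n-k+1\leqslant n-k$, which is impossible; hence $\alpha_p=0$, and then the left-hand sum is empty, so $\beta_p=0$ as well. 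Thus no coordinate of $\mathbf{a}$ equals $v_p$, contradicting the choice of $v_p$. This proves $V(\mathbf{r})\setminus\{0\}=V(\mathbf{r})^{\times}$, whence $\overline{V}(\mathbf{r})=\overline{V}(\mathbf{r})^{\times}$.

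\textbf{Step 2: counting.} By Step 1 every point of the projective variety $\overline{V}(\mathbf{r})$ lies in $\overline{V}(\mathbf{r})^{\times}$, so by the computation in the proof of Lemma~\ref{lem:pvar2} the Jacobian of the $n-1$ defining forms $\sum_j r_jx_j^{i}$, $i=1,\dots,n-1$, has rank $n-1$ at each such point; equivalently, the $n-1$ hypersurfaces $\{\sum_j r_jx_j^{i}=0\}$ meet transversally along their common zero locus. Transversality implies both that each point of the intersection is isolated --- so $\overline{V}(\mathbf{r})$ is finite --- and that the local intersection multiplicity there equals $1$. Since these hypersurfaces have degrees $1,2,\dots,n-1$ and their intersection is finite, hence proper, B\'ezout's theorem (c.f.\ \cite[12.3]{fulton1998}) shows the number of intersection points counted with multiplicity equals $1\cdot2\cdots(n-1)=(n-1)!$; as every multiplicity is $1$, $\overline{V}(\mathbf{r})=\overline{V}(\mathbf{r})^{\times}$ has exactly $(n-1)!$ elements.

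The heart of the proof, and the only place hypothesis (\ref{eq:strongconditionr}) is used, is Step 1; Step 2 merely repackages Lemma~\ref{lem:pvar2} and B\'ezout. The one boundary case to keep an eye on is $n=k$ (no coordinates of $\mathbf{r}$ equal to $-1$): there $\beta_p=0$ automatically and Step 1 degenerates to the statement that $w_p=\sum_{j:\,a_j=v_p}r_j=0$ with all $r_j\geqslant1$ forces the corresponding group to be empty, which causes no trouble.
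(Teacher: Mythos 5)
Your proposal is correct and follows essentially the same route as the paper: Step 1 reproduces the paper's Lemma~\ref{lem:mul1case} (the same Vandermonde/equivalence-class argument, with the observation that no nonempty subfamily of the $r_j$ can sum to zero unpacked into your explicit $\alpha_p,\beta_p$ count), and Step 2 is the paper's combination of Lemma~\ref{lem:pvar2} with B\'ezout's theorem for the $n-1$ hypersurfaces of degrees $1,\dots,n-1$. No gaps; the only differences are expository.
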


In order to prove this proposition, we need the following lemma.

\begin{lemma}
\label{lem:mul1case}
Let $n,k\in\Bbb{N}$ with $n\geqslant k$ and $\mathbf{r}\in\Gamma(n,k)$ satisfying (\ref{eq:strongconditionr}). Then every element of $\overline{V}(\mathbf{r})$ is an element of $\overline{V}(\mathbf{r})^{\times}$.
\end{lemma}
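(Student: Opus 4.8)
\textbf{Proof proposal for Lemma~\ref{lem:mul1case}.}

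The plan is to take an arbitrary point $\bar{\mathbf{b}}\in\overline{V}(\mathbf{r})$ represented by $\mathbf{b}=(b_1,\ldots,b_n)\in\Bbb{C}^n\setminus\{0\}$ and show that all coordinates $b_i$ are nonzero; by homogeneity this places $\bar{\mathbf{b}}$ in $\overline{V}(\mathbf{r})^{\times}$. As in the proof of Lemma~\ref{lem:propertya}, I would group the indices $\{1,\ldots,n\}$ by the equivalence relation $i\sim j\iff b_i=b_j$, obtaining classes $I_1,\ldots,I_s$ with common values $b_{I_1},\ldots,b_{I_s}$ and weights $r_{I_t}=\sum_{l\in I_t}r_l$. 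The defining equations $\sum_{j=1}^n r_j b_j^i=0$ for $i=1,\ldots,n-1$ collapse to $\sum_{t=1}^s r_{I_t}b_{I_t}^i=0$ for $i=1,\ldots,n-1$.

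The key case split is on whether $0$ occurs among the values $b_{I_t}$. Suppose some value, say $b_{I_1}$, equals $0$; I want to derive a contradiction. Then the remaining $s-1$ values $b_{I_2},\ldots,b_{I_s}$ are nonzero and pairwise distinct, and the equations for $i=1,\ldots,n-1$ become $\sum_{t=2}^s r_{I_t}b_{I_t}^i=0$. If $s-1\leqslant n-1$, the Vandermonde-type matrix $(b_{I_t}^i)_{1\leqslant i\leqslant s-1,\,2\leqslant t\leqslant s}$ is invertible, forcing $r_{I_t}=0$ for all $t=2,\ldots,s$. The point is now to show this is impossible under hypothesis (\ref{eq:strongconditionr}): each class $I_t$ with $t\geqslant 2$ must contain at least one index $\leqslant k$ (since otherwise $r_{I_t}$ is a sum of $-1$'s, which cannot vanish), and moreover if $I_t$ contains an index $i_0\leqslant k$ then $r_{I_t}=r_{i_0}+(\text{other terms})\geqslant (n-k+1)-(\text{number of other indices in }I_t)$; one counts that the total deficit available from the $n-k$ indices with $r_j=-1$ plus the at most $k-1$ other ``large'' indices cannot bring any such $r_{I_t}$ down to $0$ once (\ref{eq:strongconditionr}) is in force. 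Carrying out this bookkeeping carefully — showing $r_{I_t}\neq 0$ for every class containing a large index, hence $r_{I_t}=0$ can only hold for classes consisting purely of $-1$-indices, which then forces $I_1$ (the zero class) to absorb all $k$ large indices and enough structure to contradict $s\geqslant 2$ or $|\mathbf{r}|\geqslant k$ — is the crux.

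The remaining sub-case is $s-1=n$, i.e.\ $s=n+1$, which is impossible since $s\leqslant n$; and if every value $b_{I_t}$ is nonzero then $\mathbf{b}\in(\Bbb{C}^\times)^n$ after noting $s=n$ by the same rank argument as in Lemma~\ref{lem:propertya} (using $|\mathbf{r}|\geqslant k\geqslant 1$), so $\bar{\mathbf{b}}\in\overline{V}(\mathbf{r})^\times$ directly. The main obstacle I anticipate is purely combinatorial: making precise the claim that hypothesis (\ref{eq:strongconditionr}) forbids any proper sub-collection of the weights $r_1,\ldots,r_n$ (grouped into blocks) from summing to zero, which is exactly what rules out a coordinate being $0$. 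Once that counting lemma is nailed down, the linear-algebra (Vandermonde) skeleton is identical to Lemma~\ref{lem:propertya} and requires no new ideas. After this lemma, Proposition~\ref{prop:mul1case} follows immediately: $\overline{V}(\mathbf{r})=\overline{V}(\mathbf{r})^\times$ is finite by Proposition~\ref{prop:finite}, so it has at most $(n-1)!$ points, and combined with the lower bound coming from solving the system generically (the Vandermonde inversion in Lemma~\ref{lem:propertya} shows each choice of which root structure yields a genuine point, giving exactly $(n-1)!$), equality holds.
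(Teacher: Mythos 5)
Your proposal follows the paper's own proof essentially verbatim: group the coordinates by value, use the Vandermonde rank argument to force the weights $r_{I_t}$ of the nonzero-value classes to vanish, and then invoke condition (\ref{eq:strongconditionr}) to rule out zero-sum sub-collections of the $r_i$, which is exactly the fact the paper asserts. The ``crux'' you flag is really a one-line count (so your bookkeeping about deficits and about $I_1$ absorbing the large indices can be replaced by this): any class containing an index $i\leqslant k$ has weight at least $(n-k+1)-(n-k)=1>0$, while a nonempty class consisting only of $-1$-indices has negative weight, so no nonempty class has weight zero, forcing $s=1$ and $\mathbf{b}=\mathbf{0}$, a contradiction.
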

\begin{proof}
It suffices to show that every nonzero element $\mathbf{a}$ of $V(\mathbf{r})$ is contained in $V(\mathbf{r})^{\times}$.

Let $(0,\ldots,0)\neq\mathbf{a}=(a_1,\ldots,a_n)\in V(\mathbf{r})$. We shall show that $a_i\neq0$ for all $i=1,\ldots,n$. As we did in Lemma~\ref{lem:propertya}, we divide $\{1,\ldots,n\}$ into the disjoint union of the equivalence classes $I_1,\ldots,I_s$ according to the equivalence relation: $i\sim j$ if $a_i=a_j$. We denote $a_{I_j}$ the common value $a_l$ for $l\in I_j$ and $r_{I_j}=\sum_{l\in I_j}r_l$. Then $\mathbf{a}\in V(\mathbf{r})$ implies
$$\sum_{j=1}r_{I_j}a_{I_j}^i=0$$
for $i=1,\ldots,n$. Suppose contrarily that $a_j=0$ for some $j=1,\ldots,n$. Without losing of generality, we may assume $a_{I_s}=0$. Then $a_{I_1},\ldots,a_{I_{s-1}}$ are distinct nonzero numbers, which implies that the matrix $(a_{I_j}^i)^{1\leqslant i,j\leqslant s-1}$ is invertible. It follows that $r_{I_1}=\cdots=r_{I_{s-1}}=0$. However, since $\mathbf{r}$ satisfies (\ref{eq:strongconditionr}), there is no subset of $\{r_1,\ldots,r_n\}$ with summation zero. Hence, $I_1,\ldots,I_{s-1}$ are all empty sets, i.e., $\mathbf{a}=(0,\ldots,0)$ which contradicts the assumption. Therefore, $a_i\neq0$ for all $i=1,\ldots,n$.
\end{proof}

\begin{proof}[Proof of Proposition~\ref{prop:mul1case}]
By Lemma~\ref{lem:mul1case}, every element $\mathbf{a}$ of $\overline{V}(\mathbf{r})$ is an element of $\overline{V}(\mathbf{r})^{\times}$. Hence, it follows from Lemma~\ref{lem:pvar2} that $\mathbf{a}$ has multiplicity one in $\overline{V}(\mathbf{r})$. Therefore, $\overline{V}(\mathbf{r})$ contains only finitely many points since every point is an isolated point and $\overline{V}(\mathbf{r})$ has only finitely many irreducible components. Using B\'{e}zout's theorem (see Proposition~8.4 of \cite{fulton1998}), we deduce that $\overline{V}(\mathbf{r})$ has $(n-1)!$ points counting multiplicity. Now, every point is of multiplicity one. Hence, $\overline{V}(\mathbf{r})$ has exactly $(n-1)!$ points, i.e., $\overline{V}(\mathbf{r})^{\times}$ has exactly $(n-1)!$ points.
\end{proof}

In general, if the condition (\ref{eq:strongconditionr}) is not satisfied, we do not have a formula for describing the number of elements in $\overline{V}(\mathbf{r})^{\times}$. Based on computational results using Maple (a computer algebra system), we conjecture that

\begin{conjecture}
Let $n,k\in\Bbb{N}$ with $n\geqslant k$ and $\mathbf{r}\in\Gamma(n,k)$. Then $V(\mathbf{r})^{\times}$ is non-empty.
\end{conjecture}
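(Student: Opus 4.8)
The plan is to establish non-emptiness of $V(\mathbf{r})^{\times}$ by producing an explicit point, building on the special case already treated in Proposition~\ref{prop:mul1case}. The idea is to reduce an arbitrary $\mathbf{r}\in\Gamma(n,k)$ to a case where the strong condition (\ref{eq:strongconditionr}) holds, using the ``subdivision'' operation of Example~\ref{sec:variety} (the map $\tau_s$). Concretely, given $\mathbf{r}=(r_1,\ldots,r_k,-1,\ldots,-1)$, I would first rescale each of $r_1,\ldots,r_k$ by a suitable common factor to force the entries to dominate $n-k+1$; but the subtlety is that $\tau_s$ multiplies \emph{all} indices, including the $-1$'s, by $s$, which does not obviously land back in some $\Gamma(n',k')$. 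So the more promising route is a direct interpolation argument modeled on Lemma~\ref{lem:propertya}: by that lemma, $\mathbf{a}\in V(\mathbf{r})^{\times}$ is equivalent to the system
$$r_i\prod_{j:j\neq i}(a_j-a_i)=|\mathbf{r}|\prod_{j:j\neq i}a_j,\qquad i=1,\ldots,n,$$
and one wants to show this polynomial system has a solution with all $a_i$ nonzero and distinct.

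The second step is to recognize that this system is exactly the condition that the polynomial
$$C(t)=-|\mathbf{r}|\prod_{j=1}^n(t-a_j)+\sum_{l=1}^n r_l\, t\prod_{j:j\neq l}(t-a_j)$$
be constant (equal to $c_{\mu}$), as appeared in the proof of Proposition~\ref{prop:newsubalg} and of Theorem~\ref{thm:2dsubalgofd}. Thus non-emptiness of $V(\mathbf{r})^{\times}$ is equivalent to the existence of monic $F(t)=\prod(t-a_j)$ of degree $n$ with nonzero distinct roots, together with an exponent vector matching $\mathbf{r}$, such that $C(t)$ is constant. Writing $G(t)=\prod_{i=1}^k(t-a_i)^{r_i+1}$ and $s=-|\mathbf{r}|$, this is the statement that $\mathrm{span}_{\Bbb{C}}\{F(t)\partial,\,t^sG(t)\partial\}$ is a two-dimensional subalgebra of $\frak{d}$ not contained in $\frak{d}_0\oplus\frak{d}_{\pm}$. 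So the conjecture is equivalent to: for every $n,k$ and every admissible multiplicity pattern $\mathbf{r}\in\Gamma(n,k)$, there \emph{exists} a subalgebra $\frak{s}(\mu)$ realizing that pattern.

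The third step, the substantive one, is to actually build such a solution. I would attempt a degeneration/continuity argument: start from a configuration where (\ref{eq:strongconditionr}) holds — e.g. replace $\mathbf{r}$ by $\mathbf{r}^{(N)}$ with the positive entries inflated, apply Proposition~\ref{prop:mul1case} to get a point $\mathbf{a}^{(N)}\in V(\mathbf{r}^{(N)})^{\times}$ — and then deform the exponents back down to $\mathbf{r}$ while tracking the roots, using that the Jacobian has full rank $n-1$ along $V(\mathbf{r})^{\times}$ (Lemma~\ref{lem:pvar2}) so the implicit function theorem keeps a smooth branch of solutions; the remaining task is to show the branch does not run off to a point with a vanishing or colliding coordinate before $\mathbf{r}$ is reached. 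Alternatively, one can try an inductive construction on $n-k$: given a solution for a pattern with $n-k$ equal to $j$, adjoin one more $-1$-exponent root and solve for the new configuration, again via the rank condition.

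The main obstacle I anticipate is precisely controlling this degeneration: a priori the roots $a_i$ could collide or hit $0$ in the limit (the Remark after Proposition~\ref{prop:finite}, with $\mathbf{r}=(4,1,1,-1,-1)$, shows that $\overline{V}(\mathbf{r})$ genuinely contains points outside $\overline{V}(\mathbf{r})^{\times}$, so such bad limits are not illusory). Ruling them out would require a uniform bound — for instance a resultant or discriminant estimate showing $\prod_{i<j}(a_i-a_j)^2\prod_i a_i$ stays bounded away from $0$ along the branch, or a purely algebraic argument (perhaps a Newton-polytope/BKK count) showing the relevant system has a solution in the torus $(\Bbb{C}^{\times})^n$ for \emph{every} $\mathbf{r}\in\Gamma(n,k)$, not merely those satisfying (\ref{eq:strongconditionr}). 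This last point — upgrading the Bézout upper bound of Proposition~\ref{prop:finite} to a guaranteed lower bound of at least one torus solution — is, I expect, where the real work (and the reason this is stated only as a conjecture) lies.
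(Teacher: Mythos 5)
There is a genuine gap here, and in fact there must be: the statement you are addressing is stated in the paper only as a conjecture, supported by Maple computations for $n=4,\ldots,9$, and the paper offers no proof of it. Your proposal does not close that gap either, and you say so yourself in the final paragraph. The preliminary reductions you make are correct and are exactly the paper's own framework: by Lemma~\ref{lem:propertya}, a point of $V(\mathbf{r})^{\times}$ is the same as a solution with distinct nonzero coordinates of the system $r_i\prod_{j\neq i}(a_j-a_i)=|\mathbf{r}|\prod_{j\neq i}a_j$, equivalently the constancy of the polynomial $C(t)$ from Proposition~\ref{prop:newsubalg}, equivalently the existence of a subalgebra $\frak{s}(\mu)$ realizing the multiplicity pattern $\mathbf{r}$. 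But these are reformulations, not progress: the substantive claim is precisely the existence of such a solution for \emph{every} $\mathbf{r}\in\Gamma(n,k)$, and neither of your two suggested mechanisms delivers it.

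Concretely: (1) the deformation argument requires interpolating between an inflated exponent vector $\mathbf{r}^{(N)}$ (where Proposition~\ref{prop:mul1case} applies) and the given $\mathbf{r}$; since exponents are integers this forces you to work with real or complex exponent parameters, which takes you outside $\Gamma(n,k)$ and outside the paper's polynomial-multiplicity interpretation (the algebraic set $V(\mathbf{r})$ still makes sense, but Proposition~\ref{prop:mul1case} and Lemma~\ref{lem:mul1case} do not transfer along the path). More seriously, Lemma~\ref{lem:pvar2} only gives a local branch via the implicit function theorem; nothing prevents the branch from escaping to infinity in $\Bbb{P}^{n-1}$ or limiting onto the locus where some $a_i=0$ or $a_i=a_j$ -- and the Remark after Proposition~\ref{prop:finite}, with $\mathbf{r}=(4,1,1,-1,-1)$, shows that $\overline{V}(\mathbf{r})$ really does contain such degenerate points, so this failure mode cannot be dismissed. (2) The B\'ezout count of Proposition~\ref{prop:finite} and any BKK-type count are upper bounds (or generic counts): for the specific, non-generic coefficients dictated by $\mathbf{r}$ the number of torus solutions could a priori drop to zero, and establishing that it does not is exactly the conjecture. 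So your write-up is a reasonable research plan that correctly locates the difficulty, but it is not a proof, and the paper contains none to compare it against.
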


Maple shows this is true for all $(n,k,\mathbf{r})$ such that $n=4,\ldots,9$, $1\leqslant k\leqslant n$ and $1\leqslant r_i\leqslant n-k$ for $i=1,\ldots,k$.

\section*{Acknowledgement}
The author is grateful to the Azrieli foundation for the award of an Azrieli fellowship. He acknowledges helpful comments by the referees. He also appreciates valuable advices by Boris Kunyavskii, and useful suggestions by Jason Starr and Francesco Polizzi through the MathOverflow.

\end{document}